\documentclass[12pt]{article}
\usepackage{mathtools,amsmath,amssymb}
\usepackage[textwidth=6.3in,textheight=9in,top=2cm,bottom=2cm,left=2.5cm,right=2.5cm]{geometry} % page format 
\usepackage{booktabs,tabularx}
\usepackage[table]{xcolor}
\usepackage{color,graphicx}
\usepackage{subcaption}
\usepackage{float}
\usepackage{algorithm2e} 
\usepackage{hyperref}

\newcommand{\R}{\mathbb{R}}

\newcommand{\bu}{\boldsymbol u}
\newcommand{\bv}{\boldsymbol v}
\newcommand{\bz}{\boldsymbol z}

\newcommand{\be}{\boldsymbol e}

\newcommand{\bU}{\boldsymbol U}

\newcommand{\bP}{\boldsymbol P}
\newcommand{\br}{\boldsymbol r}
\newcommand{\bc}{\boldsymbol c}

\newcommand{\bvar}{\boldsymbol{\varphi}}
\newcommand{\bvarg}{\boldsymbol{\varphi}^g}
\newcommand{\tr}{\tilde{r}}

\newcommand{\tK}{\tilde{K}}
\newcommand{\di}{\mathrm d}
\newtheorem{theorem}{Theorem}

\newcounter{remark}
\def\theremark {\arabic{remark}}

\newtheorem{Proof}{Proof}

\newenvironment{proof}{\begin{Proof}\rm}{\hfill $\Box$ \end{Proof}}

\title{Computational study of Proper Orthogonal Decomposition methods for parametric approximations}
\author{Bosco
Garc\'{\i}a-Archilla\thanks{Departamento de Matem\'atica Aplicada
II, Universidad de Sevilla, Sevilla, Spain. Research is supported by
Spanish MCINYU under grants PID2021-123200NB-I00 and PID2022-136550NB-I00. (bosco@esi.us.es)}
\and Alicia Garc\'{\i}a-Mascaraque\thanks{Departamento de
Matem\'aticas, Universidad Aut\'onoma de Madrid, Spain. Research is supported
by predoctoral contract associated to PID2022-136550NB-I00 supported by MCIN/AEI/10.13039/501100011033 and FSE+ (alicia.garcia-mascaraque@uam.es)}
  \and Julia Novo\thanks{Departamento de
Matem\'aticas, Universidad Aut\'onoma de Madrid, Spain. Research is supported
by Spanish MINECO
under grant PID2022-136550NB-I00. (julia.novo@uam.es)}}

\begin{document}
\maketitle
\abstract{This paper studies the computational implementation of proper orthogonal decomposition reduced-order models (POD-ROMs) for evolutionary parametric time-dependent partial differential equations (PDEs). For a one-parameter model, many papers in the literature build the correlation matrix by projecting onto $L^2(\Omega)$ even though optimal pointwise error estimates are proved when projecting onto $H^1_0(\Omega)$. We compare both scenarios and observe the similar performance in practice. Additionally, to get the POD approximation it is necessary to compute the nonlinear term in the reduced equations. This requires a high computational cost that increases when the problem gets more complex. Numerical results in this paper show different approaches to address this issue. Discrete Empirical Interpolation Method (DEIM) is the most efficient approach. It reduces computational time by approximately half compared to computing the whole FEM formulation by a tensor construction while maintaining the same accuracy. For a multiparameter model, we analyze the new and standard method proposed in \cite{newmethod} using a two-dimensional Brusselator model to complement the results and support the error analysis with a more complex system.}

\bigskip

\section{Introduction}
\label{sec:1}
It is known that getting precise approximations for time dependent parametric partial differential equations may result in a remarkably high computational cost. Reduced-Order Methods (ROM) drastically cut down the time to execute the numerical simulations while preserving enough accuracy in the desired approximations. 

The idea of Reduced-Order Methods of  Proper Orthogonal Decomposition (POD) type is to compact the information from a set of approximations, called \textit{snapshots}, computed with a standard method, see~\cite{ku-Vol}. We will refer to that method as the Full Order Method (FOM). In our context, we will consider the finite element method (FEM) as the FOM. In terms of accuracy, the Proper Orthogonal Decomposition (POD) method is sufficiently accurate if its error is comparable to the FEM error for unknown parameters and time instants.

Using the snapshots and Singular Value Decomposition (SVD), an orthonormal POD-ROM basis is constructed projecting onto $L^2(\Omega)$ or $H^1_0(\Omega)$. The ROM approximation is then computed by projecting the weak formulation of the PDE onto the reduced space generated by the first $r$ functions of the basis (for any integer $r\ge 1$).  

For most of the POD-ROM methods studied in the literature, the set of snapshots is based on full order approximations at different time instants and parameter values. However, with this strategy we cannot obtain pointwise optimal error bounds~\cite{dqgarnovo}. A major focus when analyzing the error on POD-ROM methods is to obtain a pointwise in time error estimate. For non parametric partial differential equations, some approaches use first-order divided differences instead of solely the set of snapshots (\cite{traianwang},\cite{rubino_etal}, \cite{ku-Vol}, \cite{singler}). 

Another key decision in constructing the POD-ROM is choosing the inner product space for the projection. While it is a common practice in the literature to project onto $L^2(\Omega)$ \cite{chapelle}, it has been proved that $H_0^1(\Omega)$ projections yield to optimal error bounds for the gradients that are directly bounded in terms of the tail of eigenvalues: $$\sum_{k>r} \lambda_k$$ where $\lambda_1 \geq \lambda_2 \geq \ldots \geq \lambda_{d_r} > 0$ are the nonzero eigenvalues of the dataset's correlation matrix, see \cite{rubino_etal}. In the present paper, we show numerical examples comparing both choices, projecting into $L^2(\Omega)$ or $H_0^1(\Omega)$, to understand deeply POD-ROM performance. 

We use the theoretical framework developed in~\cite{newmethod} and~\cite{temporal_nos} as the basis for our analysis. In~\cite{temporal_nos}, the continuous-in-time case is studied for both the FOM and POD-ROM methods. The data set from which the POD-ROM basis is constructed is examined for first order divided differences and continuous in time derivatives. We will just focus on the first order divided differences case. Optimal error bounds of the FOM and the POD-ROM solutions are proved for the $L^2(\Omega)$ norm for a semi-linear reaction-diffusion model problem. 

The ideas in~\cite{temporal_nos} are extended in~\cite{newmethod} for parametric dependent partial differential equations. In~\cite{newmethod}, a new method to get pointwise in time and in parameter error bounds is introduced. The error analysis of the standard method, based on several parameters, is also presented. The numerical experiments in~\cite{newmethod} are in 1d. In the present work, new numerical simulations will be carried out for one and several parameters for the two-dimensional Brusselator model \cite{dqgarnovo}, \cite{hairer}, \cite{hairerII}, \cite{homescu}, \cite{lefever}, \cite{roose} to show the performance of the approaches mentioned before. 

The most challenging part in incrementing the spatial dimension of the problem is dealing with the nonlinear term in the POD-ROM equations. When projecting a nonlinear term onto a low dimension POD-ROM subspace, the computational cost still scales with the size of the original full-order spatial mesh. If the main goal of using POD-ROM is to drastically reduce the computational cost by solving a reduced system, this dependence on the dimension of the FOM creates an inefficient online simulation of the POD-ROM approximations. To address this situation, we present two ways of solving the computational problem:
\begin{enumerate}
	\item Creating a tensor structure for the nonlinear term. We precompute a tensor by projecting the nonlinear term in the weak formulation into the POD basis. Because of that, the online part to get the POD approximation consists on a tensor-vector multiplication involving only the reduced coefficients. As it can be observed, there is no more dependence on the FOM dimension and using this method will lead to a faster online computation. The main disadvantage of this method is that it is limited to polynomial nonlinear formulations like the one in Brusselator equations~\cite{temporal_nos}.
	\item Using Discrete Empirical Interpolation Method (DEIM). DEIM is generally used in the literature when dealing with nonlinear terms in POD approximations (e.g., \cite{deim}, \cite{barrault}). It requires building a POD basis from snapshots of only the nonlinear term and then, defining a subspace of spatial indices to generate the nonlinear function. Because DEIM interpolates the nonlinear term using only a few important nodes, it significantly lowers the online computational cost. Using DEIM approximation to the nonlinear term in the POD formulation adds an extra term in the error analysis.
\end{enumerate}

The outline of this paper is as follows. In Sect.~\ref{sec:pre}, we state some preliminaries and notation to define the context. Sect.~\ref{sec:pod} is dedicated to presenting the POD method for one-parameter, using first-order difference quotients to generate the POD basis. Projecting onto $L^2(\Omega)$ or $H_0^1(\Omega)$ is also considered. Additionally, it is shown the general approach to proceed for several parameters including the intuitive deduction of the method in~\cite{newmethod}. In Sect.~\ref{sec:nonlin}, we explain in detail how to handle the nonlinear term in the reduced equations. Finally, in Sect.~\ref{sec:numexp}, some numerical experiments show the computational complexity of the problem in 2d, contrasting with the simpler 1d case studied in~\cite{newmethod}.

\section{Preliminaries and notation}
\label{sec:pre}
As a model problem we consider the following reaction-diffusion equation 
\begin{equation}
	\label{eq:model}
	\begin{array}{rcll}
		u_t^\alpha(t,x)-\nu_\alpha\Delta u^\alpha(t,x)+g_\alpha(u^\alpha(t,x))& = &f_\alpha(t,x) &\quad (t,x)\in (0,T^\alpha]\times \Omega,\\
		u^\alpha(t,x)&=&0,& \quad  (t,x)\in (0,T^\alpha]\times \partial \Omega,\\
		u^\alpha(0,x)&=&u_0^\alpha(x),&\quad  x\in \Omega,
	\end{array}
\end{equation}
in a bounded domain $\Omega\subset \R^d$, $d\in \{1,2,3\}$, where $\alpha$ is a parameter, $\alpha\in [\alpha_0,\alpha_L]$, $\nu_\alpha>0$ is the diffusion parameter and $g_\alpha$ is a nonlinear smooth function. For simplicity, we assume $g_{\alpha}$ is Lipschitz continuous with Lipschitz constant $L>0$, i.e. assume that 
\begin{equation}
	\label{eq:L}
	|g_{\alpha}(s) - g_{\alpha}(t)|\leq L |s-t| \quad .
\end{equation} 
The diffusion parameter, $\nu_\alpha$, nonlinear
term, $g_\alpha$, forcing term, $f_\alpha$, and initial condition, $u_0^\alpha$, may or may not depend all of them on the parameter 
$\alpha$, but there is at least 
one of them that depends on $\alpha$. For simplicity here and in the sequel we have taken $t_0=0$ as initial time. 

Let $C_p$ be the constant in the Poincar\'e inequality
\begin{equation}
	\label{eq:poin}
	\|v\|_0\le C_p \|\nabla v\|_0,\quad v\in H_0^1(\Omega),
\end{equation}
where $\| \cdot \|_0$ denotes the standard $L^2(\Omega)$ norm. 

Let us denote by $X_h^k$ a finite element space based on continuous piecewise polynomials of degree $k$  defined
over a partition of $\Omega$ into simplices $K$ of diameter $h$ 
and by $V_h^k$ the  finite element space based on continuous piecewise  polynomials of degree $k$ that satisfies also the homogeneous Dirichlet boundary conditions of the problem.

The following inverse inequality holds for all $v_h\in X_h^l$ , see~\cite[Theorem 3.2.6]{Cia78},
\begin{equation}
	\label{cota_inv}
	\| v_{h} \|_{W^{m,p}(K)} \leq c_{\mathrm{inv}}
	h^{n-m-d\left(\frac{1}{q}-\frac{1}{p}\right)}
	\|v_{h}\|_{W^{n,q}(K)},
\end{equation}
$0\le n\le m\le 1$, $1\le q\le p\le \infty$.

In the sequel,  $I_h u \in X_h^k$ will denote
the Lagrange interpolant of a continuous function $u$. The following bound can be found in~\cite[Theorem 4.4.4]{brenner-scot}
\begin{equation}
	\label{cota_inter}
	|u-I_h u|_{W^{m,p}(K)}\le c_\text{\rm int} h^{n-m}|u|_{W^{n,p}(K)},\quad 0\le m\le n\le k+1,
\end{equation}
where $n>d/p$ when $1< p\le \infty$ and $n\ge d$ when $p=1$.

We  consider a smooth curve {(i.e., with sufficient bounded derivatives)} $\alpha\in [\alpha_0,\alpha_L]\mapsto T^\alpha\in(0,T]$, for $\alpha_0, \alpha_L$ given parameters.
Our aim is to approximate the solutions of~\eqref{eq:model} on given time intervals $[0,T^\alpha]$ 
for parameters $\alpha$ in the given set $[\alpha_0,\alpha_L]$. 

Let us assume we want to approximate~\eqref{eq:model} for a parameter value $\alpha$ in a given time interval $[0,T^{\alpha}]$. We consider the semi-discrete finite element approximation: Find $u_h^\alpha\ :\ (0,T^{\alpha}]\rightarrow V_h^k$ such that
\begin{equation}
	\label{eq:gal_semi}
	(u_{h,t}^\alpha,v_h)+\nu_\alpha(\nabla u_h^\alpha,\nabla v_h)+(g_\alpha(u_h^\alpha),v_h)=(f_\alpha,v_h),\quad \forall\ v_h\in V_h^k,
\end{equation}
with $u_h^{\alpha}(0)=I_h u_0^\alpha\in V_h^k.$ In~\eqref{eq:gal_semi} $u_{h,t}$ denotes the derivative of $u_h$ with respect to time. If the weak solution of~\eqref{eq:model} is sufficiently smooth, then the following error estimate is well known,
see for example \cite[Proof of Theorem 1]{bosco-titi-fem}, \cite[Theorem 14.1]{Thomee}, \cite[Lemma 4.2]{Wang}, 
\begin{equation}\label{cota_gal}
	\max_{0\le t\le T}\left(\|(u^\alpha-u_h^{\alpha})(t)\|_0+h\|(u^\alpha-u_h^{\alpha})(t)\|_1\right)\le C(u)h^{k+1},\quad \alpha\in[\alpha_0,\alpha_L],\ 0\le t\le T^\alpha,
\end{equation}
where $\|\cdot\|_1$ denotes the standard norm in $H^1(\Omega)$.

\section{Proper orthogonal decomposition}
\label{sec:pod}
In this section, we outline the standard approach for obtaining a POD approximation for a reaction-diffusion equation defined in~\eqref{eq:model} that depends on only one-parameter $\alpha \in \R$, as described in \cite{ku-Vol}. Its detailed error analysis can be found in~\cite{newmethod}. After the presentation of the initial framework, the idea to obtain pointwise-in-time approximations by using first-order divided differences will be showed (\cite{temporal_nos}, \cite{locke}). This is the most established method in the literature since it yields uniform errors between the POD-ROM approximation and the FEM approximation to~\eqref{eq:model}. We will also refer to the first-order divided differences as difference quotients (DQs) in the remainder of the work. Later on this section, we will present the case when our aim is not only to obtain an approximation for the solution to~\eqref{eq:model} for one-parameter $\alpha$, but also for any parameter in a given interval. We develop the idea of the standard method in this context and introduce the new method proposed in~\cite{newmethod}. Using this method allows us to get continuous error bounds in time and parameter. This means that the bounds are not only valid for parameters and time instants used for generating the POD basis of the method, but also for time and parameter values in between. This final \textit{a priori} error estimate will be included in this section to better understand the numerical results in Sect.~\ref{sec:numexp}.

We start by defining the general way to approximate \eqref{eq:model} by the POD standard method \cite{ku-Vol}. Fix $\alpha \in \R$ and let $u^{\alpha}$ be the solution of the model problem in \eqref{eq:model} for that specific parameter. Taking $T=T^{\alpha}>0$ as the final time, we define the time step $\Delta t = T/M$, for $M>0$ fixed. Let $t_j =j\Delta t$, $j=0, \ldots, M$ be the uniformly distributed instant times. We define $\bU = {\rm span}\{\mathcal{U}\}$ such that		
\begin{equation}\label{eq:bU_std}
	\mathcal{U} = \left\{ u_h^{\alpha}(t_0), u_h^{\alpha}(t_1), \ldots, u_h^{\alpha}(t_M) \right\}.
\end{equation}
We can observe that $\mathcal{U}$ has $N=(M+1)$ elements, and it corresponds to the snapshots at $M+1$ uniformly distributed times. We can rewrite $\mathcal{U}$ as
$$ \mathcal{U} = \{ y_h^1, \ldots, y_h^N\},$$

to simplify the correlation matrix expression as $K=((k_{i,j}))\in \R^{N\times N}$ with
$$ k_{i,j}=\frac{1}{N}(y_h^i,y_h^j)_X,\quad i,j=1,\ldots,N,
$$
where $(\cdot,\cdot)_X$ is the inner product in $X$ and $X$ can be $L^2(\Omega)$ or $H_0^1(\Omega)$. If we denote by $\lambda_1\ge \lambda_2 \ge \ldots\ge \lambda_{d_r}>0$ the positive eigenvalues of $K$ and by $\bv_1,\ldots,\bv_{\boldsymbol{d_r}}\in \R^N$ the associated eigenvectors, the orthonormal POD basis functions of $\bU$ are
$$
\varphi_k=\frac{1}{\sqrt{N}}\frac{1}{\sqrt{\lambda_k}}\sum_{j=1}^Nv_k^j \, y_h^j,
$$
where $v_k^j$ is the $j$ component of the eigenvector $\bv_k$. 
For any $1\le r\le d_r$ let 
$$\bU^r={\rm span}\left\{\varphi_1,\varphi_2,\ldots,\varphi_r\right\},$$
be the reduced POD space and $P^r_X:X(\Omega)\rightarrow \bU^r$ the $X$-orthogonal projection onto $\bU^r$. The error in the projection in average behaves as the tail of the eigenvalues of the correlation matrix for a concrete $r$,
\begin{equation}\label{eq:eigenval_tail}
	\frac{1}{N}\sum_{j=1}^N\|(I-P^r_X)y_h^j\|_X^2=\sum_{k={r+1}}^{d_r}\lambda_k \,.
\end{equation}
We refer to $r$ as the number of POD modes.
Instead of having this average in time error bound, we want to have a pointwise-in-time error bound at the selected times. Then, the bound can also be generalized for continuous-time using interpolation techniques as in \cite{temporal_nos}. The idea was introduced in \cite{locke} and \cite{temporal_nos}. We will present the general scheme by denoting  $u_j^\alpha \colon= u_h^\alpha(t_j)$, for $0\le j\le M$, and maintaining the uniform time step $\Delta t$ as in the standard method above. Then, we can observe that for each time in the mesh, $t_j$, the FEM approximation stated in \eqref{eq:gal_semi} can be written as a combination of the solutions to the Galerkin formulation in the previous time instants,  $t_0, t_1, \ldots , t_{j-1}$:
\begin{equation}\label{eq:qdexp}
	u_j^\alpha = u_0^\alpha + \Delta t\,\frac{u_1^\alpha-u_0^\alpha}{\Delta t} + \cdots + \Delta t\,\frac{u_j^\alpha - u_{j-1}^\alpha}{\Delta t}.
\end{equation}
The difference quotients are defined as:
$$	D^t u^\alpha_j=\frac{u^{\alpha}_j-u^{\alpha}_{j-1}}{\Delta t}.		$$
Then, if we define $\bU$ introducing DQs instead of pointwise values as in \eqref{eq:bU_std}, we have that 
\begin{equation}\label{eq:bU_qd}
	\bU = \operatorname{span}\Bigl\{\sqrt{M+1}\,u_0^\alpha, \, D^tu_j^\alpha, \, 1\leq j \leq M\Bigr\} .
\end{equation}

Notice that the number of elements is $N= M+1$, the same as for the standard method. The average error bound in \eqref{eq:eigenval_tail} for this scenario leads to the following result
\begin{equation}\label{eq:meanQD}
	\|(I-P^r_X) u^\alpha_0\|_X^2 +\frac{1}{M+1}\sum_{j=1}^{M+1}\|(I-P^r_X)D^t u_j^\alpha\|_X^2=\sum_{k={r+1}}^{d_r}\lambda_k ,
\end{equation}
where $\lambda_1 \ge \lambda_2 \ge \ldots \ge \lambda_{d_r} \ge 0 $ now denote the positive eigenvalues of the correlation matrix $K = ((k_ij)) \in \R^{N\times N}$ with
$$ k_{ij} = \frac{1}{N} (y_h^i, y_h^j)_X, \qquad i,j = 1, \ldots, N,$$ 
where 
$$ y_h^1 = \sqrt{M +1} u_h^\alpha(t_0), \qquad y_h^j = D^t u_h^\alpha(t_{j-1}) \quad j=2, \ldots, N.$$
In case $X=H_0^1(\Omega)$, we can write $u_h^\alpha(t_j)$ as a linear combination of the difference quotients and the initial condition to get the desired pointwise-in-time estimate in $L^2(\Omega)$, \cite[Lemma 2]{fd},
\begin{equation}\label{eq:pointL2}
	\max_{0\leq j \leq M}\ \left\|\left( I - P^r_{H_0^1}\right)u_h^\alpha(t_j)\right\|_0^2  \leq  \left(2 + 4T^2\right) C_p^2 \sum_{k={r+1}}^{d_r}\lambda_k,
\end{equation}
where $C_p$ is the constant in Poincar\'e inequality in \eqref{eq:poin}. Following the same argument used to prove \eqref{eq:pointL2}, we also have the following bound in $H_0^1(\Omega)$,
\begin{equation}\label{eq:ergradH1}
	\max_{0\leq j \leq M}\ \left\| \nabla \left(I - P^r_{H_0^1}\right) u_h^\alpha(t_j) \right\|_0^2  \leq  \left(2 + 4T^2\right) \sum_{k={r+1}}^{d_r}\lambda_k.
\end{equation}
Then, we have both error bounds for $L^2(\Omega)$ and $H_0^1(\Omega)$ in terms of the tail of the eigenvalues of the correlation matrix. 

Let us consider the case $X=L^2(\Omega)$. In this scenario, the bound \eqref{eq:pointL2} gives us
\begin{equation}\label{proyec_L2_er}
	\max_{0\leq j \leq M}\ \left\| \left(I - P^r_{L^2}\right) u_h^\alpha(t_j) \right\|_0^2  \leq  \left(2 + 4T^2\right) \sum_{k={r+1}}^{d_r}\hat{\lambda}_k,
\end{equation} 
where we have added a hat to the eigenvalues to distinguish them from the ones in $X=H_0^1(\Omega)$ case. For this choice of $X$, we apply the inverse inequality from \cite[Lemma 2]{ku-Vol} so that 
\begin{equation}\label{eq:ergradL2}
	\max_{0\leq j \leq M} \left\|\nabla \left(I -P^r_{L^2}\right) u_h^{\alpha}(t_j) \right\|_0^2 \leq \|S_r\|_2\left(2 + 4T^2\right) \sum\limits_{k=r+1}^{d_r} \hat{\lambda}_k,
\end{equation}
where $S_r$ is the stiffness matrix in the POD formulation. A direct estimation of $\|S_r\|_2$ taking into account the inverse inequality presented in \eqref{cota_inv} is $\|S_r\|\simeq O(h^{-2})$. However, the behavior of  $\|S_r\|$ is usually better in practice. Then, if $X=L^2(\Omega)$, the error bound of the gradients grows with the size of the stiffness matrix. However, eigenvalues are different for every $X$ choice. In practice, we have
\begin{equation}
	\sum\limits_{k=r+1}^{d_r} \hat{\lambda}_k \ll \sum\limits_{k=r+1}^{d_r} \lambda_k .
\end{equation}
So, using $X=L^2(\Omega)$ does not imply a worse performance on the POD approximation.

Once the choice on the projection space is done, we consider the following semi-discrete POD-ROM approximation to approach \eqref{eq:model}: Find $u_r^{\alpha}:(0,T^\alpha]\rightarrow \bU^r$
such that
\begin{equation}\label{eq:pod}
	(u_{r,t}^{\alpha},v_r)+\nu_\alpha(\nabla u_r^{\alpha},\nabla v_r)+(g_\alpha(u_r^{\alpha}),v_r)=(f_\alpha,v_r),\quad \forall\ v_r\in \bU^r,
\end{equation}
with $u_r^{\alpha}(0)=u_{r,0}\in \bU^r$ and $u_{r,0}\approx u_0^\alpha$, $\alpha\in \R$.

In Sect.~\ref{sec:numexp}, we will carry out numerical experiments on the choice of the projection for the QDs approach. Here, we present \cite[Theorem 3]{temporal_nos} in order to be able to analyze the errors.
\begin{theorem}\label{theo:timepoint}	
	Let $u_h$ be the FEM approximation and $u_r$ the POD approximation to \eqref{eq:gal_semi}. Let $P^r u_h$ be the $H_0^1$-orthogonal projection onto $\bU^r$ and $L>0$ is the Lipschitz constant on the nonlinear term $g_\alpha$.
	Then, there exists a constant $C>0$ such that the following bound holds
	
	\begin{align*}
		\max _{0 \leq n \leq M}\left\|u_r\left(t_n\right)-u_h\left(t_n\right)\right\|_0^2 \leq & C \sum_{k=r+1}^{d_r} \lambda_k  \\
		& +C(\Delta t)^{2 q} \int_0^T\left(\left\|\frac{\partial^q \nabla u_h(t)}{\partial t^q}\right\|_0^2+\left\|\frac{\partial^{q+1} \nabla u_h(t)}{\partial t^{q+1}}\right\|_0^2\right) \di t
	\end{align*}
	
	Moreover, for all $t \in[0, T]$ assuming also $\frac{\partial^q u_h}{\partial t^q} \in L^{\infty}\left([0, T] ; H^1\right), q \geq 2$, it holds that
	
	\begin{align*}
		\left\|u_r(t)-u_h(t)\right\|_0^2 \leq & C\left(\sum_{k=r+1}^{d_r} \lambda_k+(\Delta t)^{2 q} \int_0^T\left\|\frac{\partial^{q+1} \nabla u_h(t)}{\partial t^{q+1}}\right\|_0^2 \di t\right)  \\
		& +C(\Delta t)^{2 q} \max _{0 \leq t \leq T}\left\|\frac{\partial^q \nabla u_h(t)}{\partial t^q}\right\|_0^2 .
	\end{align*}
\end{theorem}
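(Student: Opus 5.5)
Throughout, $P^r=P^r_{H_0^1}$. I would use the standard splitting $u_r-u_h=(u_r-P^r u_h)+(P^r u_h-u_h)=:e_r+\eta$. The term $\eta$ at the grid times is controlled by \eqref{eq:pointL2}: $\max_n\|\eta(t_n)\|_0^2\le (2+4T^2)C_p^2\sum_{k>r}\lambda_k$. It then remains to bound $e_r\in\bU^r$.

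For $e_r$ I would subtract \eqref{eq:gal_semi}, tested with $v_r\in\bU^r\subset V_h^k$, from \eqref{eq:pod}. This gives
\[
(e_{r,t},v_r)+\nu(\nabla e_r,\nabla v_r)+(g(u_r)-g(u_h),v_r)=(\eta_t,v_r)+\nu(\nabla\eta,\nabla v_r)=(\eta_t,v_r),
\]
since the $H_0^1$-projection kills the diffusion term. I would take $v_r=e_r$, use \eqref{eq:L} to write $|(g(u_r)-g(u_h),e_r)|\le L(\|e_r\|_0+\|\eta\|_0)\|e_r\|_0$, and apply Gronwall. With $e_r(0)=0$ (choosing $u_{r,0}=P^ru_h(0)$) this yields
\[
\|e_r(t)\|_0^2\le C\int_0^T\bigl(\|\eta\|_0^2+\|\eta_t\|_0^2\bigr)\,\di t.
\]

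\textbf{The main obstacle} is that the snapshots only control $\eta$ and the difference quotients $D^t\eta_j$ at the grid times, not $\eta$ and $\eta_t$ continuously in time. To get around this I would avoid the continuous integral and compare $\eta_t$ with difference quotients. Writing $\eta_t=(I-P^r)u_{h,t}$, on each $[t_{j-1},t_j]$ I would split
\[
u_{h,t}=D^tu_h(t_j)+\bigl(u_{h,t}-D^tu_h(t_j)\bigr).
\]
For the first piece, \eqref{eq:meanQD} together with Poincar\'e gives $\Delta t\sum_j\|(I-P^r)D^tu_j\|_0^2\le C(T+\Delta t)\sum_{k>r}\lambda_k$. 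For the second piece I would use the $H^1$-stability of $I-P^r$ and Poincar\'e, and bound the Taylor remainder in terms of $\Delta t$ and derivatives of $\nabla u_h$. To reach the order $(\Delta t)^{2q}$ in the statement, a single difference quotient is not enough. I would instead replace it by a $q$-th order local interpolant $I_q u_h$ built from snapshot values, which lies in the snapshot span. Its error satisfies
\[
\int_0^T\|\nabla(u_h-I_qu_h)_t\|_0^2\,\di t\le C(\Delta t)^{2q}\int_0^T\|\partial_t^{q+1}\nabla u_h\|_0^2\,\di t,
\]
and similarly for the value error, which produces the $\partial_t^{q}$ term. The factor $\|I-P^r\|$ is $1$ in $H_0^1$, and the projection error of $I_qu_h$ is again controlled by the tail $\sum_{k>r}\lambda_k$ via \eqref{eq:meanQD}. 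Checking this last step (with constants independent of $M$) is where I expect the most care to be needed.

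Combining the bound on $e_r$ with that on $\eta$ at the grid times gives the first estimate. For the second estimate, at an arbitrary $t\in[t_{n-1},t_n]$ I would bound $\|\eta(t)\|_0$ by $\|(I-P^r)(u_h-I_qu_h)(t)\|_0+\|(I-P^r)I_qu_h(t)\|_0$. The first term is at most $C_p\|\nabla(u_h-I_qu_h)(t)\|_0\le C(\Delta t)^q\max_t\|\partial_t^q\nabla u_h\|_0$, using the hypothesis $\partial_t^qu_h\in L^\infty(H^1)$. The second term is a combination of nodal values, which are bounded by \eqref{eq:pointL2}. The bound on $e_r(t)$ is pointwise in $t$ already, so this completes the second estimate.
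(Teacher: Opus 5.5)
Your proposal is correct and takes essentially the same route as the paper. The paper proves the Gronwall estimate of Theorem~\ref{th1} for $u_r-P^ru_h$ and combines it with bounds taken from \cite{temporal_nos} for $\int_0^T\|(I-P^r)u_{h,t}\|_0^2\,\di t$ and $\|(I-P^r)u_h(t)\|_0$, in terms of the eigenvalue tail and time-interpolation errors. The step you flagged does go through: on each subinterval, $(I_qu_h)_t$ is a combination of $O(1)$ neighbouring difference quotients with coefficients bounded independently of $\Delta t$, so $\int_0^T\|\nabla(I-P^r)(I_qu_h)_t\|_0^2\,\di t\le C\,\Delta t\sum_{j=1}^M\|\nabla(I-P^r)D^tu_h(t_j)\|_0^2\le C(T+\Delta t)\sum_{k>r}\lambda_k$.

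There is one harmless sign slip. With $\eta=P^ru_h-u_h$, the right-hand side of your error equation is $-(\eta_t,v_r)=((I-P^r)u_{h,t},v_r)$. Your later identity $\eta_t=(I-P^r)u_{h,t}$ has the opposite sign too, so the two errors cancel.
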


\subsection{Proper orthogonal decomposition for parametric approximations}
\label{sec:podparam}
We have presented standard and difference quotients POD approaches for a fixed parameter $\alpha \in \R$ in \eqref{eq:model}. The motivation for presenting the DQs case was to get pointwise-in-time error bounds. The optimal scenario is to have a POD method that allows us to approximate \eqref{eq:model} for every parameter in a given interval $\alpha \in [\alpha_0, \alpha_L] \subset \R$, and at any moment in the associated time interval for that parameter, $t \in [0, T^\alpha]$. However, optimal pointwise error bounds cannot be proved in this case.

The natural procedure to deal with this situation is to extend the standard method based on generating a space of snapshots as defined in \eqref{eq:bU_std} for a selected number of parameters in the interval, $\{ \alpha_0, \alpha_1, \ldots, \alpha_L\} \subset [\alpha_0, \alpha_L]$. However, as in the case of only one parameter, for this method, optimal pointwise bounds cannot be proved.

To be able to prove pointwise bounds, in \cite{newmethod} a new POD method for continuous parameterized equations is proposed. In this section, we will briefly present the key ideas for the standard general method and the new method. We will also introduce the main result in \cite{newmethod} that describes the pointwise in time and parameter error bound dependence on the tail of eigenvalues, the time step size, and the parameter step size. This statement will be useful to understand the results obtained from the numerical experiments in Sect.~\ref{sec:numexp}

First, let us fix $L>0$, and $\alpha_0<\alpha_L$, so that the interval $[\alpha_0, \alpha_L]$ is consistent. Then, denote $\Delta \alpha = (\alpha_L - \alpha_0)/L$ and set $\alpha_l = \alpha_0 + l\Delta \alpha$, $l=0, \ldots, L$, $L$ uniformly distributed parameters in the interval. For each $l=0, \ldots, L$ and final time $T^l = T^{\alpha_l}>0$ let $\Delta t_l = T^l/M$, for $M>0$ fixed. Notice that $M$ is the same for all $\alpha_l$. Let $t_j^l =j\Delta t_l$, $j=0, \ldots, M$ be the instant times from which we compute the snapshots. We can define $\bU = {\rm span}\{\mathcal{U}\}$ such that		
$$
\mathcal{U} = \left\{ u_h^{\alpha_l}(t_j), 0\leq l \leq L, 0 \leq j \leq M \right\} \, .
$$
Observe $\mathcal{U}$ has $N=(L+1)(M+1)$ elements. For consistency with the POD method notation, we rewrite, $\mathcal{U}$ as
$$ \mathcal{U} = \{ y_h^1, \ldots, y_h^N\} \, .$$
The correlation matrix then is $K=((k_{i,j}))\in \R^{N\times N}$ with
$$ k_{i,j}=\frac{1}{N}(\nabla y_h^i,\nabla y_h^j),\quad i,j=1,\ldots,N \, ,
$$
and $(\cdot,\cdot)$ the inner product in $L^2(\Omega)$. We are assuming the projection to be in $H_0^1(\Omega)$ norm since we showed before that this is the optimal norm for getting pointwise in time estimations in $H_0^1(\Omega)$.

Following the definitions established for POD methods, we denote by $\lambda_1\ge \lambda_2\ldots\ge \lambda_{\boldsymbol{d_r}}>0$ the positive eigenvalues of $K$ and
by $\bv_1,\ldots,\bv_{\boldsymbol{d_r}}\in \R^N$ the associated eigenvectors. For any $1\le r\le d_r$ let 
$$\bU^r={\rm span}\left\{\varphi_1,\varphi_2,\ldots,\varphi_r\right\},$$
and $P^r:H_0^1(\Omega)\rightarrow \bU^r$ the $H_0^1$-orthogonal projection onto $\bU^r$. Then, it holds
$$
\frac{1}{N}\sum_{j=1}^N\|\nabla (I-P^r)y_h^j\|_0^2= \frac{1}{N}\sum_{l=0}^{L}\sum_{j=0}^{M+1}\|\nabla (I-P^r)u_h^{\alpha_l}(t_j)\|_0^2 = \sum_{k={r+1}}^{d_r}\lambda_k \, .
$$
In order to obtain
$$
\|\nabla (I-P^r)u_h^{\alpha}(t)\|_0^2 \sim \sum_{k=r+1}^{d_r}\lambda_k \, ,
$$
for any parameter $\alpha \in [\alpha_0,\alpha_L]$ and any time $t \in [0,T^\alpha]$, the idea, proposed in \cite{newmethod}, is to define the FEM approximation at a given time instant and for a fixed parameter as a combination of first-order finite differences (or difference quotients) computed from previous time instants. More precisely, let
$$
u_j^{\alpha_l} \colon= u_h(t_j^l,\alpha_l), \qquad 0 \le j \le M \, ,
$$
for $l \in \{0,\ldots,L\}$. Then,
$$
u_j^{\alpha_l} = u_0^{\alpha_l} + \Delta t\,D^t u_1^{\alpha_l}+ \cdots + \Delta t\,D^t u_j^{\alpha_l} \, .
$$
Adding the difference quotients in the POD initial set will lead to pointwise-in-time error bounds as in \eqref{eq:pointL2}. To conclude with pointwise-in-parameter estimates, the idea in \cite{newmethod} is to rewrite the finite difference for $\alpha_l$ in time $j$ as a linear combination of the finite differences for $\alpha_i$, $0\leq i \leq l-1$, and time $0\leq s \leq j-1$. As initial example, we set $l=1$ and $j=1$ so that
\begin{align*}
	D^t u_1^{\alpha_1} & =  D^t u_1^{\alpha_1} \pm D^t u_1^{\alpha_0}\\
	& = D^t u_1^{\alpha_1} + \Delta \alpha \, \frac{D^t u_1^{\alpha_1} - D^t u_1^{\alpha_1}}{\Delta \alpha_0} \\
	& = D^t u_1^{\alpha_1} + \Delta \alpha \, 	D^\alpha D^t u^{\alpha_1}_1 \, .
\end{align*}
where we denote the finite differences in time and parameter as:
$$
D^\alpha D^t u^{\alpha_l}(t_j^l)= \frac{D^t u^{\alpha_l}(t_j^l)-D^t u^{\alpha_{l-1}}(t_{j}^{l-1})}{\Delta \alpha} \quad 0\leq l \leq L, \, 1 \leq j \leq M \, .$$
Then,
$$
u_j^{\alpha_1} = u_0^{\alpha_1} + \Delta t_1 \, \sum\limits_{s=1}^j D^t u_j^{\alpha_0} + \Delta t_1 \, \sum\limits_{s=1}^j \Delta \alpha \, D^\alpha D^t u_j^{\alpha_1} \, .
$$

In general, for any $l \in {0, \ldots, L}$ and any $t_j^l$,
$$
u_j^{\alpha_l} = u_0^{\alpha_l} + \Delta t_l \, \sum\limits_{s=1}^j D^t u_j^{\alpha_0} + \Delta t_l \, \sum\limits_{i=1}^{l} \sum\limits_{s=1}^j \Delta \alpha \, D^\alpha D^t u_j^{\alpha_i}\, .
$$
For that reason, we define the space
$
\bU= {\rm span}\left( {\cal U}\right)\, ,
$
where
\begin{eqnarray*}
	{\cal U} &=&\left\{\sqrt{N}u_h^{\alpha_l}(t_0), \ 0\le l\le L,\right.\\
	&&\sqrt{(L+1)}D^t u_h^{\alpha_0}(t_j^0),\ 1\le j\le M,\\
	&&\left. D^t D^\alpha u_h^{\alpha_l}(t_j^l),\ 1\le j\le M,\ 1\le l\le L\right\} \, .
\end{eqnarray*}
Then, 
\begin{align*}
	&\sum_{l=0}^L\bigl\|\nabla(I-P^r)u_h^{\alpha_l}(t_0)\bigr\|_0^2 
	+\frac{1}{M+1}\sum_{j=1}^M\bigl\|\nabla(I-P^r)D^t u_h^{\alpha_0}(t_j^0)\bigr\|_0^2\\
	+\;&\frac{1}{N}\sum_{j=1}^M\sum_{l=1}^L
	\bigl\|\nabla(I-P^r)D^t D^\alpha u_h^{\alpha_l}(t_j^l)\bigr\|_0^2
	\;=\;\sum_{k=r+1}^{d_r}\lambda_k \, ,
\end{align*}
yields and we can get pointwise-in-time and in-parameter estimates. Note both approaches for continuous parameterized equations methods use $N=(L+1)(M+1)$ elements to generate the correlation matrix, but differ to get optimal pointwise estimates.

The complete error analysis for both methods is detailed in \cite{newmethod}. For the standard method, quasi-optimal bounds are proved using the ideas in \cite{dqgarnovo}. The following theorem can be found in \cite[Theorem 2]{newmethod}. This proof is an extension for parameterized equations of the continuous in time case presented in Theorem \ref{theo:timepoint}. 
\begin{theorem}\label{theo:parampoint}
	Assume $u_r^{\alpha}(t_0)=P^ru_h(0)$. Let $t\in[0,T^\alpha]$ and $\alpha\in[\alpha_0,\alpha_L]$. Then, there exists a constant $C$
	depending on $T$, $\alpha_L-\alpha_0$ and $C_p$  such that the following bound holds for $q\ge2$, $m\ge2,$ whenever the functions $u_h^{\alpha}$ are smooth enough so that all the terms are well defined 
	\begin{eqnarray}\label{max_puntual_todo}
		\|u_r^{\alpha}(t)-u_h^{\alpha}(t)\|_0^2\le C_{m,q}
		\sum_{k={r+1}}^{d_r}\lambda_k+C_{1,u_h}(\Delta t)^{2q}+C_{2,u_h}(\Delta \alpha)^{2m-1} \, ,
	\end{eqnarray}
	where
	\begin{eqnarray*}
		C_{m,q}
		&=& C\bigl((m+1)+qm\bigr), \nonumber\\
		C_{1,u_h}
		&=& C\sum_{j=l}^{l+m-1}\max_{0\le t\le T^j}
		\left\|
		\frac{\partial^q\nabla u_h^{\alpha_j}(t)}
		{\partial t^q}
		\right\|_0^2
		\nonumber\\
		&&{}+C\sum_{j=l}^{l+m-1}
		\int_0^{T^j}
		\left(
		\left\|
		\frac{\partial^q\nabla u_h^{\alpha_j}(t)}
		{\partial t^q}
		\right\|_0^2
		+
		\left\|
		\frac{\partial^{q+1}\nabla u_h^{\alpha_j}(t)}
		{\partial t^{q+1}}
		\right\|_0^2
		\right)\,\di t,
		\nonumber\\
		C_{2,u_h}
		&=& C\int_{\alpha_l}^{\alpha_{l+m-1}}
		\int_0^{T_\mu}
		\sum_{i+j\le m}
		\left(
		\left\|
		\frac{\partial^{i+j+1}\nabla u_h^\mu(t)}
		{\partial t^{i+1}\partial\mu^j}
		\right\|_0^2
		+\left\|
		\frac{\partial^{i+j}\nabla u_h^\mu(t)}
		{\partial t^i\partial\mu^j}
		\right\|_0^2
		\right)\,\di t\,\di\mu
		\nonumber\\
		&&{}+C\int_{\alpha_l}^{\alpha_{l+m-1}}
		\sum_{i+j\le m}
		\left\|
		\frac{\partial^{i+j}\nabla u_h^\mu(T^\mu t/T^\alpha)}
		{\partial t^i\partial\mu^j}
		\right\|_0^2
		\,\di\mu.
	\end{eqnarray*}
	
\end{theorem}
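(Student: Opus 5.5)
The proof splits the error into a projection part and a discrete-evolution part:
\[
u_r^\alpha(t)-u_h^\alpha(t)=\bigl(u_r^\alpha(t)-P^ru_h^\alpha(t)\bigr)+\bigl(P^ru_h^\alpha(t)-u_h^\alpha(t)\bigr)=:e_r(t)+\eta(t).
\]

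\textbf{Step 1: the evolution error $e_r$.} Subtract the POD-ROM equation \eqref{eq:pod} from \eqref{eq:gal_semi}, both tested with $v_r\in\bU^r$. Because $P^r$ is the $H_0^1$-orthogonal projection, the diffusion contribution of $\eta$ vanishes. The error equation then reads
\[
(e_{r,t},v_r)+\nu_\alpha(\nabla e_r,\nabla v_r)+(g_\alpha(u_r^\alpha)-g_\alpha(P^ru_h^\alpha),v_r)=-(\eta_t,v_r)+(g_\alpha(u_h^\alpha)-g_\alpha(P^ru_h^\alpha),v_r).
\]
Take $v_r=e_r$, use the Lipschitz bound \eqref{eq:L}, Poincar\'e \eqref{eq:poin} and Gronwall. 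Since $e_r(0)=0$ by the assumption $u_r^{\alpha}(t_0)=P^ru_h(0)$, this gives
\[
\|e_r(t)\|_0^2\le C\int_0^{T^\alpha}\bigl(\|\nabla\eta\|_0^2+\|\nabla\eta_t\|_0^2\bigr)\,\di s .
\]
The whole theorem therefore reduces to bounding $\max_t\|\nabla(I-P^r)u_h^\alpha(t)\|_0^2$ and $\int\|\nabla(I-P^r)u_{h,t}^\alpha\|_0^2$, uniformly in $\alpha$ and $t$. This is exactly the structure of Theorem \ref{theo:timepoint}.

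\textbf{Step 2: projection errors at nodes.} At grid parameters $\alpha_l$ and grid times $t_j^l$, expand $u_j^{\alpha_l}$ through the telescoping identity: initial values, the $D^t u^{\alpha_0}$ terms, and $\Delta t_l\,\Delta\alpha\sum_i\sum_s D^\alpha D^t u^{\alpha_i}_s$. Apply $\nabla(I-P^r)$ and Cauchy--Schwarz. The double sum has at most $LM$ terms with weight $\Delta t\,\Delta\alpha$, so it yields
\[
(T\,(\alpha_L-\alpha_0))^2\,\frac1N\sum\|\nabla(I-P^r)D^tD^\alpha u\|_0^2 .
\]
With the scalings $\sqrt N$ and $\sqrt{L+1}$ built into $\mathcal U$, the three sums combine into $C\sum_{k>r}\lambda_k$, by the identity displayed just before the theorem. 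The nodal difference quotients $D^tu_j^{\alpha_l}$, which are needed for the $\eta_t$ term, are handled the same way.

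\textbf{Step 3: off-grid times and parameters (main obstacle).} For arbitrary $(t,\alpha)$, rescale time to $t\mapsto T^\mu t/T^\alpha$ so that all parameters share the reference grid. Then introduce a tensor interpolant, of degree $q-1$ in time on $q$ nodes and degree $m-1$ in parameter on the $m$ nodes $\alpha_l,\dots,\alpha_{l+m-1}$.

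Write $u_h^\alpha(t)=\mathcal I u_h+(u_h-\mathcal I u_h)$. Because $I-P^r$ is a contraction in $H_0^1$, $\|\nabla(I-P^r)(u_h-\mathcal I u_h)\|_0\le\|\nabla(u_h-\mathcal Iu_h)\|_0$.

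The interpolant part is a fixed linear combination of nodal values, with Lagrange weights bounded by constants depending on $q$ and $m$. Step 2 bounds it by $C(m+1+qm)\sum_{k>r}\lambda_k$, which produces $C_{m,q}$.

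The remainder is controlled by Peano-kernel / Bramble--Hilbert type estimates. The time part contributes $(\Delta t)^{2q}$ with the max and integral norms of $\partial_t^q\nabla u_h$ and $\partial_t^{q+1}\nabla u_h$, giving $C_{1,u_h}$. The parameter part produces mixed derivatives up to order $m$ integrated over $[\alpha_l,\alpha_{l+m-1}]$.

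The delicate point is the exponent $2m-1$ rather than $2m$. Turning a one-dimensional pointwise-in-$\mu$ remainder into an $L^2(\di\mu)$ integral via Cauchy--Schwarz over an interval of length $O(m\Delta\alpha)$ costs one power of $\Delta\alpha$. I would first try to obtain exactly this via the integral form of the Taylor remainder in $\mu$, applied to both $\eta$ and $\eta_t$. That accounts for the two lines of $C_{2,u_h}$, including the boundary-in-time term at $T^\mu t/T^\alpha$.

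Inserting Steps 2 and 3 into Step 1 yields \eqref{max_puntual_todo}.
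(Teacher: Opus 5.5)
Your proposal follows essentially the route the paper indicates. The paper gives no proof of its own and defers to the cited work, as an extension of Theorems~\ref{theo:timepoint} and~\ref{th1}: a Gronwall reduction to $H_0^1$-projection errors, nodal bounds from the telescoping identity for the new snapshot set, and interpolation in rescaled time and in the parameter, where the integral (Peano) remainder plus Cauchy--Schwarz gives $(\Delta\alpha)^{2m-1}$ and the rescaling $T^\mu t/T^\alpha$ produces the mixed derivatives in $C_{2,u_h}$. One point should be stated explicitly: for $\eta_t$ you must approximate $u_{h,t}$ by combinations of $q$ consecutive difference quotients (equivalently, differentiate a degree-$q$ rather than a degree-$(q-1)$ interpolant), and the snapshot set controls these quotients only in time average, which suffices because $\eta_t$ enters only through $\int_0^{T^\alpha}\|\nabla\eta_t\|_0^2\,\di s$.
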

With this last result, we have presented all the theoretical framework that we will use in Sect.~\ref{sec:numexp} for analyzing new numerical simulations. First, numerical experiments will be carried out for approximating \eqref{eq:model} for one parameter with the difference quotients POD approach and different projection choices. Then, the implementation will include several parameters in an interval using the standard and the new methods explained in this section.

\section{Nonlinear term in reduced equations}
\label{sec:nonlin}
For simplicity, we avoid the dependence on $\alpha$ in the equations. Let us remember the semi-discrete finite element approximation previously defined in \eqref{eq:gal_semi}: Find $u_h :\ (0,T^{\alpha}]\rightarrow V_h^k$ such that
\begin{equation}\label{eq:gal_semi_no-alpha}
	(u_{h,t},v_h)+\nu(\nabla u_h,\nabla v_h)+(g(u_h),v_h)=(f,v_h),\quad \forall\ v_h\in V_h^k \, ,
\end{equation}
with $u_h(0)=I_h u_0\in V_h^k.$ In order to understand the scale for the nonlinear term in the POD reduced equations, it will be convenient to formulate the problem in a matrix form. Taking $v_h$ as the finite element basis $\{ \phi_1, \phi_2, \ldots, \phi_{n_k}\}$, we obtain a system of ordinary differential equations (ODEs). The formulation of \eqref{eq:gal_semi_no-alpha} is equivalent to solve the following ODE:
\begin{equation}
	M_h \frac{\di \bu_h(t)}{\di t} + \nu S_h \bu_h(t) + G(u_h) = f_h \, ,
\end{equation}
where $\bu_h \in \R^{n_k}$ are the FEM coefficients of $u_h$, $M_h \in \R^{n_k \times n_k}$ is the mass matrix, $S_h \in \R^{n_k \times n_k} $ is the stiffness matrix and $f_h \in \R^{n_k}$ is the constant force vector with $f_h^i = \left(f, \phi_i\right), \, i=1, \ldots, n_k$. $G(u_h)$ is the nonlinear term such that
\begin{equation}\label{eq:nonlin}
	G(u_h) = \begin{bmatrix}
		(g(u_h), \phi_1) \\
		\vdots \\
		(g(u_h), \phi_{n_k}) \\
	\end{bmatrix}\, .
\end{equation}
In the POD basis approach, the reduced space $ \bU^r = {\rm span}\{\varphi_1, \varphi_2, \ldots, \varphi_r\}\subset V_h^k$ is determined by the fix number of modes $r$. Remember, as stated before in \eqref{eq:pod}, that the semi-discrete POD-ROM approximation consists on finding $u_r : [0,T] \mapsto \bU^r$ such that 
\begin{equation}\label{eq:pod_no-alpha}
	(u_{r,t}, v_r) + \nu (\nabla u_r, \nabla v_r) + (g(u_r),v_r) =(f, v_r),\quad \forall\ v_r \in \bU^r \, .
\end{equation}
As we want to study the implementation of the reduced nonlinear term, we denote  
\begin{equation}\label{eq:ur-coef}
	u_r(t,x) = \sum_{j=1}^r c_j(t)\varphi_j(x) \, ,
\end{equation} 
where we define $\bc(t)  = [c_1(t), c_2(t), \ldots, c_{r}(t)]^T$ as the POD coefficients. Observe that $M_r \in \mathbb{R}^{r\times r}$ is the reduced mass matrix, $S_r \in \R^{r \times r}$ the reduced stiffness matrix, and $f_r \in \R^{r}$ such that $f_r^j = (f_{\alpha}, \varphi_j), \, j=1, \ldots, r$, is the reduced constant force term. The POD formulation above is equivalent to solve the following ODE:
\begin{equation}\label{eq:pod-matrix}
	M_r \frac{\di \bc(t)}{\di t} + \nu S_r \bc(t) + G_r(u_r) = f_r \, ,
\end{equation}
where
\begin{equation}\label{eq:nonlin-pod}
	G_r(c) = \begin{bmatrix}
		(g(u_r), \varphi_1) \\
		\vdots \\
		(g(u_r), \varphi_{r}) \\
	\end{bmatrix}\, .
\end{equation}
$$ 	$$
Observe that for a fixed $j=1, \ldots, r$, we have 
$$\left( g(u_r), \varphi_j \right)= \left( g(u_r) , \sum\limits_{i=1}^{n_k} \varphi_j(x_i) \phi_i \right)= \sum\limits_{i=1}^{n_k} \varphi_j(x_i) (g(u_r), \phi_i) \, ,$$
where $x_1, \ldots, x_{n_k}$ denote the nodes of the triangulation in the FEM space. Then, the nonlinear term in the reduced equations can be computed using the nonlinear term in the FEM formulation in \eqref{eq:nonlin} by
$$ G_r^j(u_r) = [\varphi_j(x_1),\ldots, \varphi_j(x_{n_k})]^T G(u_r) \, .$$
Note that $ G(u_r) \in \R^{n_k \times 1}$. It is obvious that using this procedure to compute the nonlinear term, the online phase still relies on $n_k$, the number of nodes in the FEM partition. This value is really large in order to provide accurate approximations in problems in high dimensions. Therefore, the POD approach has a high computational cost loosing efficiency.

We suggest two strategies to address this issue. To maintain only calculations based on the reduced coefficients in the online part, the first approach is to compute the nonlinear term in the FEM formulation in the offline code structure with a tensor structure, see Sect.~\ref{sec:nonlin-tensor}. Notice that this method can only be implemented if the nonlinear term has a polynomial structure. The second one corresponds to a generalized method used in the literature: the DEIM approximation. In Sect.~\ref{sec:nonlin-eim} the implementation of the method is described in detail.

In Sect.~\ref{sec:numexp}, we will compare, for the Brusselator model in two dimensions, the efficiency of both methods. We will also check the good performance of both procedures compared to computing the nonlinear term using the FEM structure.

\subsection{Nonlinear term via tensor structure}
\label{sec:nonlin-tensor}
As presented before, in order to get POD approximations we want to work only with the reduced coefficients in the POD basis. That is, we want the reduced system described in \eqref{eq:pod-matrix} to depend only on $r$. In this section, we present a tensor structure for polynomial nonlinear terms to achieve this goal. In particular, we start by showing the implementation of this tensor when $g(u)= u^2$. Then, we present the computation of the tensor structure for the nonlinear term in Brusselator equations.

Take $g(u) = u^2$, from \eqref{eq:ur-coef} we define $u_r = \sum\limits_{i=1}^r c_i\varphi_i$. Then, $$g(u_r) =  \left( \sum\limits_{j=1}^r c_j\varphi_j \right) \left( \sum\limits_{k=1}^r c_k\varphi_k \right)$$ and from \eqref{eq:nonlin-pod} we get
\begin{equation*}
	G_r(u_r) = \sum_{j,k = 1}^r c_j c_k \begin{bmatrix}
		(\varphi_j \cdot \varphi_k, \varphi_1) \\
		\vdots \\
		(\varphi_j \cdot \varphi_k, \varphi_r)
	\end{bmatrix} \, .
\end{equation*}
Thus, one can compute the tensor
\begin{equation*}
	T = ((\varphi_j \cdot \varphi_k, \varphi_i))_{1\leq i,j,k \leq r} 
\end{equation*}
and evaluate the nonlinear term with the {\sc MATLAB}'s command \texttt{tensorprod} in the following way
\begin{verbatim}
	tensorprod(T, c, 3, 1) * c;
\end{verbatim}
in this context, the coefficients are summed over the third index, that is $\sum\limits_{k=1}^r c_k T_{ijk}$, and then multiplied by the coefficients again so that 
\begin{equation*}
	G(u_r) = \sum_{j,k = 1}^r c_j  c_k T_{ijk} \,  .
\end{equation*}
We proceed in a similar way for the nonlinear term of Brusselator equations. For this case, we have a coupled solution $\bu = [u, v]^T$ and $g(\bu) = [u^2 v, -u^2v]^T$. Then,
\begin{equation*}
	G(\bu_r) = \begin{bmatrix}
		\left( \begin{bmatrix}u_r^2 v_r \\ -u_r^2 v_r \end{bmatrix}, \varphi_1 \right) \\
		\vdots \\
		\left( \begin{bmatrix}u_r^2 v_r \\ -u_r^2 v_r \end{bmatrix}, \varphi_r \right)
	\end{bmatrix} \, ,
\end{equation*} 
and $\bu_r(t,x) = \sum\limits_{i=1}^r c_i(t)\varphi_i(x)$. So that we can represent the POD basis in terms of a coupled basis for each component, 
$\varphi_i = \begin{bmatrix} \varphi_i^u \\ \varphi_i^v \end{bmatrix}$. Note that $\varphi_i^u, \varphi_i^v \in V_h^k$. The nonlinear term above can be determined by a tensor structure
\begin{equation*}
	T = \left( \left( \begin{bmatrix}
		\varphi_j^u\cdot \varphi_k^u \cdot \varphi_l^v \\
		- \varphi_j^u\cdot \varphi_k^u \cdot \varphi_l^v
	\end{bmatrix}, \varphi_i \right) \right)_{1\leq i,j,k,l \leq r} \, ,
\end{equation*}
such that
\begin{equation*}
	G(\bu_r) = \sum_{j,k,l} c_j c_k c_l T_{ijkl} \, .
\end{equation*} 
Using {\sc MATLAB}'s command \texttt{tensorprod}, the final computation for the Brusselator in the online phase corresponds to 
\begin{verbatim}
	tensorprod(tensorprod(T, c, 3, 1), c, 3, 1) * c;
\end{verbatim}
Using this method has two significant advantages. Firstly, if there is a dependence on the nonlinear term on a parameter $\alpha$, a general tensor with no dependence in the parameter can be implemented easily. This implementation is used in the case we want to get approximations for several parameters using a unique tensor structure computed only once in the offline phase. The second advantage is that the tensor structure only needs to be computed once for a sufficiently large number of modes $R$. For POD computations that takes $r \leq R$, the same tensor structure can be used just selecting the corresponding modes for the tensor-vector multiplication. This is convenient if we want to check the accuracy of the POD method for different values of $r$.

\subsection{Nonlinear term via DEIM}\label{sec:nonlin-eim}
In the POD-DEIM approach, instead of one POD basis, two POD basis are derived. The first one consists on the POD approach defined in any of the cases in Sect.~\ref{sec:pod}, while the second consists of snapshots of the nonlinear term described by the Lipschitz function $g$ in \eqref{eq:L}. Taking $T>0$ as the final time in~\eqref{eq:model}, we define the time step $\Delta t = T/M$, for $M>0$ fixed. We determine $\tilde{\bU}={\rm span}\{\tilde{\mathcal{U}}\}$ such that
$$ \tilde{U} = \{ I_h g(u(t_0,x)), I_h g(u(t_1,x)), \ldots, I_h g(u(t_{M},x)\} \, .$$
Observe that we have $N=M+1$ snapshots of the Lagrange interpolant of the nonlinear function. The POD-DEIM reduced system is constructed as in Sect.~\ref{sec:pod} using the standard approach for these snapshots. If we rewrite $\tilde{\mathcal{U}}$ in the following way
$$ \tilde{\mathcal{U}} = \{ y_h^1, \ldots, y_h^{N}\} \, , $$ as we did before. The correlation matrix is $K=((k_{i,j}))\in \R^{N\times N}$ with
$$ k_{i,j}=\frac{1}{N}(\nabla y_h^i,\nabla y_h^j),\quad i,j=1,\ldots,N \, ,
$$
and $(\cdot,\cdot)$ the inner product in $L^2(\Omega)$.

Following the POD basis construction by the eigenvalues and the associated eigenvectors of $K$, we have that for any $1\le \tr \leq d_{\tr}$, the reduced space is
$$\tilde{\bU}^{\tr}={\rm span}\left\{\varphi_1^g,\varphi_2^g,\ldots,\varphi_{\tr}^g\right\},$$
and $P^{\tr}:H_0^1(\Omega)\rightarrow \tilde{\bU}^{\tr}$ the $H_0^1$-orthogonal projection onto $\tilde{\bU}^{\tr}$. As $\tilde{U}^r \subset V_h^k$ , DEIM-POD basis can also be expressed in terms of the FEM basis functions. To understand in a better way the algorithm, we define the change of basis matrix
\begin{equation}
	\label{eq:phig}
	\bvarg = \begin{bmatrix}
		\varphi_1^g(x_1) & \varphi_2^g(x_1) & \cdots & \varphi_{\tr}^g (x_1) \\
		\varphi_1^g(x_2) & \varphi_2^g(x_2) & \cdots & \varphi_{\tr}^g (x_2) \\
		\vdots 		  & \vdots & \ddots & \vdots \\
		\varphi_1^g(x_{n_k}) & \varphi_2^g(x_2) & \cdots & \varphi_{\tr}^g(x_{n_k})
	\end{bmatrix} \in \R^{ n_k \times \tr} \, ,
\end{equation}
where $x_1, \ldots, x_{n_k}$ are the FEM spatial nodes. Let us observe that the values at those nodes are the coordinates of any function in the standard FEM basis. For the general POD basis $\bU^r$, the change of basis matrix,  $\bvar \in \R^{n_k \times r}$, can be computed in the same way. Note that $\tr$ do not have to be the same as $r$. The POD-DEIM reduced system is constructed by projecting on the space spanned by the original POD basis $\bU^r$ and applying DEIM (Discrete Empirical Interpolation Method) approximation to the nonlinear function~\cite{deim}. Keeping the notation in matrix form in~\eqref{eq:pod-matrix}, the resulting reduced system is given  by
\begin{equation}\label{eq:pod-eim}
	M_r \frac{\di \bc(t)}{\di t}  + \nu S_r \bc + \hat{G}_r(u_r) = f_r \, ,
\end{equation}
where
\begin{equation}\label{eq:nonlin-pod-eim}
	\hat{G}_r(u_r) = \begin{bmatrix}
		(\hat{g}(u_r), \varphi_1) \\
		\vdots \\
		(\hat{g}(u_r), \varphi_{r}) \\
	\end{bmatrix}\,  .
\end{equation}
The discrete empirical interpolant of the nonlinear term $g$ is $\hat{g}$ where
\begin{equation}\label{hatg}
	\hat{g}(u_r(t,x))=\sum\limits_{k=1}^{\tr} \tilde{c}_k(t) \, \varphi_k^g(x) \,,
\end{equation}
and the DEIM coefficients are defined as
$$ \tilde{\bc} = (P^T \bvarg)^{-1}g(P^T \boldsymbol{\varphi} \, \bc) \, ,
$$
with $\bvarg \in \R^{n_k \times \tr}$ as stated in~\eqref{eq:phig}, $\bvar \in \R^{n_k \times r}$ the change of basis matrix of the original POD basis. $P \in \R^{n_k \times m}$ is a matrix whose $m$ columns come from the corresponding DEIM indices $\left\{\rho_1,\ldots,\rho_m\right\}$ chosen from the following algorithm (see~\cite[Algorithm 1]{deim}):

\begin{algorithm}
	\caption{DEIM for the reduced nonlinear term}
	\label{alg:deim}
	
	\KwIn{$\bvarg \in \R^{n_k \times \tr}$, $m \leq \tr$}
	\KwOut{$\left\{\rho_1,\rho_2,\ldots,\rho_{m}\right\}$.}
	
	$[|\rho|, \rho_1]=\max\left\{|\bvarg(:\, , 1)|\right\}$ \tcp*[r]{means $\rho_1$ is the index of the maximum component of $|\bvarg(:\, ,1)|$}
	$W=[\bvarg(:\,,1)]$, $\bP=[\be_{\rho_1}]$, $\overline \rho=[\rho_1]$\;
	
	\For{$l = 2$ \KwTo $\tr$}{
		Solve $(\bP W)\tilde{\bc}=\bP^{T}\bvarg(:\, ,l)$ for $\tilde{\bc}$\;
		Define $\br=\bvarg(:\, ,l)-W \tilde{\bc}$\;
		Compute $[|\rho|, \rho_l]=\max\left\{|\br|\right\}$\;
		$W=[W \ \bvarg(:\, ,l)]$, $\bP=[\bP \ \be_{\rho_l}]$, $\overline \rho=[\rho, \rho_l]$\;
	}
\end{algorithm}
Observe that we need to compute \eqref{eq:nonlin-pod-eim}. To this end, we may write
$$\left( \hat{g}(u_r), \varphi_j \right)= \left( \sum\limits_{k=1}^{\tr} \tilde{c}_k \, \varphi^g_k \, , \varphi_j  \right)= \sum\limits_{k=1}^{\tr} \tilde{c}_k \, \left(  \varphi^g_k \, , \varphi_j \right) \quad j =1, \ldots r \, .$$

In practice we compute a matrix $Z\in \R^{\tr \times r}$ with $ Z_{kj} = (\varphi_k^g \, , \varphi_j)$. The dependence on the number of FEM nodes in the nonlinear term is moved to the offline phase, such that the online phase consists on solving the ODE 
\begin{equation}\label{eq:pod-eim-c}
	M_r \frac{\di \bc(t)}{\di t}  + \nu S_r \bc + Z^T \, \tilde{\bc} \ = f_r \,  .
\end{equation}
Note that for computing $\tilde{c}$ we need to evaluate the nonlinear function only in the $m \leq \tr$ nodes determined by DEIM algorithm in the online phase, $P^T \bvar \, \bc$. In practice, we usually take $m = \tr$ and $\tr \leq r$ so that the computational cost of solving \eqref{eq:pod-eim-c} depends mainly on the original number of modes, $r$. In Sect.~\ref{sec:numexp}, numerical simulations implementing DEIM algorithm are carried out to check the well performance of the method.

Taking this into account, we can now affirm that the solution of the original system in \eqref{eq:model} is then approximated by the solution from POD-DEIM reduced system in \eqref{eq:pod-eim-c}. The accuracy of this approximation can be measured arguing as in \cite[Theorem 1]{temporal_nos}. For convenience to the reader, we include the proof of this theorem. After this, we will explain how to adapt the error analysis to the POD-DEIM method.

\begin{theorem}\label{th1} 
	Let $u_r$ be the POD-ROM approximation solving \eqref{eq:pod} and let $P^r u_h$ be the $H_0^1$-orthogonal projection  onto $\bU^r$ of the semi-discrete Galerkin approximation $u_h$ defined in \eqref{eq:gal_semi_no-alpha}.	Then, for the constant $K = \frac{2}{T} + 2L$, the following bound holds for all~$t\in [0,T]$
	\begin{eqnarray}\label{er_pro_ur}
		\lefteqn{\|u_r(t)-P^r u_h(t)\|_0^2+2\nu\int_0^t \|\nabla (u_r(s)-P^r u_h(s))\|_0^2 \ \di s}\nonumber\\
		&\le& e^{Kt}\|u_r(0)-P^r u_h(0)\|_0\\
		&&+ e^{KT}T\left(\int_0^t \|(I-P^r)u_{h,s}(s)\|_0^2\ \di s+L^2\int_0^t\|(I-P^r)u_h(s)\|_0^2 \ \di s\right).\nonumber
	\end{eqnarray}
\end{theorem}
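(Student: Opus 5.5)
The plan is a standard energy argument for the error $e_r = u_r - P^r u_h \in \bU^r$. Test the FEM equation \eqref{eq:gal_semi_no-alpha} with $v_h = v_r \in \bU^r \subset V_h^k$ and subtract it from \eqref{eq:pod_no-alpha}. Then write $u_h = P^r u_h + (I-P^r)u_h$. The key observation is that $P^r$ is the $H_0^1$-orthogonal projection, so $(\nabla (I-P^r)u_h, \nabla v_r)=0$ for all $v_r\in\bU^r$, and the diffusion term produces no projection error. The resulting error equation is
\begin{equation*}
(e_{r,t},v_r)+\nu(\nabla e_r,\nabla v_r)+(g(u_r)-g(P^ru_h),v_r) = ((I-P^r)u_{h,t},v_r) + (g(u_h)-g(P^r u_h),v_r),
\end{equation*}
valid for all $v_r\in \bU^r$. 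Here we used that $P^r$ commutes with the time derivative, since it is a fixed linear operator.

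Next I take $v_r = e_r$. This gives $\tfrac12\frac{\di}{\di t}\|e_r\|_0^2 + \nu\|\nabla e_r\|_0^2$ on the left. The two nonlinear differences are handled with the Lipschitz bound \eqref{eq:L}:
\begin{equation*}
|(g(u_r)-g(P^ru_h),e_r)|\le L\|e_r\|_0^2, \qquad |(g(u_h)-g(P^ru_h),e_r)|\le L\|(I-P^r)u_h\|_0\|e_r\|_0.
\end{equation*}
The two right-hand terms are split with Young's inequality using weights tied to $T$, namely $ab\le \tfrac{1}{2T}a^2\cdot$(suitable factor)$+\tfrac{T}{2}b^2$. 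After multiplying by $2$, the total coefficient of $\|e_r\|_0^2$ on the right becomes $\frac{2}{T}+2L = K$. This gives
\begin{equation*}
\frac{\di}{\di t}\|e_r\|_0^2 + 2\nu\|\nabla e_r\|_0^2 \le K\|e_r\|_0^2 + T\|(I-P^r)u_{h,t}\|_0^2 + TL^2\|(I-P^r)u_h\|_0^2.
\end{equation*}
The specific choice of Young weights is what makes the constant come out exactly as $K$ with the factor $T$ in front of the data terms. I would fix those weights first, e.g. $\|(I-P^r)u_{h,t}\|_0\|e_r\|_0\le \frac{T}{2}\|(I-P^r)u_{h,t}\|_0^2+\frac{1}{2T}\|e_r\|_0^2$, and the same for the Lipschitz term.

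Finally, apply Gronwall's lemma in integral form. Multiply by $e^{-Ks}$, integrate on $[0,t]$, and bound $e^{K(t-s)}\le e^{KT}$ inside the data integrals. Keeping the dissipative term, which is legitimate since $e^{K(t-s)}\ge1$, gives \eqref{er_pro_ur} with $e^{Kt}$ multiplying the initial error. The statement as written has $\|u_r(0)-P^ru_h(0)\|_0$ unsquared; I expect the proof actually produces the squared norm, and I would record it that way.

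The main obstacle is mild: making sure the Galerkin orthogonality really applies, which requires $\bU^r\subset V_h^k$ and the $H_0^1$ projection. The other point needing care is the bookkeeping of the Young constants so that $K$ and the prefactor $e^{KT}T$ match the statement.
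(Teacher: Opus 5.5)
Your proposal is correct and matches the paper's proof step for step. You derive the same error equation via $H_0^1$-orthogonality, use the same Young weights $\frac{T}{2}$ and $\frac{1}{2T}$ to obtain $K=\frac{2}{T}+2L$, and close with Gronwall. You are also right that the argument yields the squared initial error $\|u_r(0)-P^r u_h(0)\|_0^2$, so the unsquared norm in the statement is a typo.
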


\begin{proof}
	A straightforward calculation, using the projection property, shows 
	that the projection of $u_h$ satisfies for all $v_r\in \bU^r$
	\begin{eqnarray*}
		\lefteqn{(P^ru_{h,t},v_r)+\nu(\nabla P^ru_h,\nabla v_r)+(g(P^r u_h),v_r)}\\
		&=&(f,v_r)+\left((P^r-I) u_{h,t},v_r\right) +\left(g(P^r(u_h))-g(u_h),v_r\right).
	\end{eqnarray*}
	Denoting the error by
	$
	e_r=u_r-P^r u_h \in \bU^r
	$
	and subtracting this identity from \eqref{eq:pod} gives for all $v_r\in \bU^r$
	%	\begin{equation}\label{eq:laer}
		%		(e_{r,t},v_r)+\nu(\nabla e_r,\nabla v_r) =((I-P^r)u_{h,t},v_r)+(g(P ^r u_h)-g(u_r),v_r)
		%		+(g(u_h)-g(P^r u_h),v_r).
		%	\end{equation}
	\begin{eqnarray}\label{eq:laer}
		(e_{r,t},v_r)+\nu(\nabla e_r,\nabla v_r)
		&=&((I-P^r)u_{h,t},v_r)
		+(g(P^r u_h)-g(u_r),v_r)
		\nonumber\\
		&&{}+(g(u_h)-g(P^r u_h),v_r).
	\end{eqnarray}
	
	Taking $v_r=e_r$ in \eqref{eq:laer}, applying the Cauchy--Schwarz and Young inequalities together with
	the Lipschitz continuity of~$g$,
	we get
	\begin{eqnarray}\label{eq:th_er1}
		\lefteqn{ \frac{1}{2}\frac{d}{dt}\|e_r\|_0^2+\nu\|\nabla e_r\|_0^2 }\nonumber \\
		& \le & \frac{T}{2}\|(I-P^r)u_{h,t}\|_0^2
		+\frac{1}{2T}\|e_r\|_0^2+L\left\| e_r\right\|_0^2
		+L\left\|(I-P^r)u_h\right\|_0\left\|e_r\right\|_0\nonumber\\
		&\le&\frac{T}{2}\|(I-P^r)u_{h,t}\|_0^2
		+\frac{1}{2T}\|e_r\|_0^2+L\|e_r\|_0^2 
		+\frac{T}{2}L^2\|(I-P^r)u_h\|_0^2+\frac{1}{2T}\|e_r\|_0^2.
	\end{eqnarray}
	
	Multiplication by $2$ leads to 
	\[
	\frac{d}{dt}\|e_r\|_0^2+2\nu\|\nabla e_r\|_0^2 \le K\|e_r\|_0^2
	+T\|(I-P^r)u_{h,t}\|_0^2+TL^2\|(I-P^r)u_h\|_0^2 \, ,
	\]
	where
	\begin{equation}
		\label{constant_K}
		K=\frac{2}{T} + 2L \, .
	\end{equation}
	Integrating in time and applying Gronwall's lemma we reach \eqref{er_pro_ur}.
\end{proof}
We now explain how to bound the error in the POD-DEIM method. We first observe that POD-DEIM consists on finding $u_r : [0,T] \mapsto \bU^r$ such that 
\begin{equation*}
	(u_{r,t}, v_r) + \nu (\nabla u_r, \nabla v_r) + (\hat{g}(u_r),v_r) =(f, v_r),\quad \forall\ v_r \in \bU^r \, ,
\end{equation*}
where the nonlinear term $\hat g$ is defined in \eqref{hatg}.

Then, instead of the term $( g(P^r u_h) - g(u_r), e_r)$ in \eqref{eq:laer}, we have $$( g(P^r u_h) - \hat{g}(u_r), e_r).$$ To handle this term we add and subtract $(g(u_r), e_r)$ to get
\begin{equation*}
	(g(P^r u_h) - \hat{g}(u_r), e_r) =  (g(P^r u_h) - g(u_r), e_r) + (g(u_r) - \hat{g}(u_r), e_r) \, .
\end{equation*} 

Then, denoting the DEIM error as  
$$  e_r^{\text{DEIM}}(g) = g(u_r) - \hat{g}(u_r) \, ,$$
we obtain
\begin{equation*}
	(g(u_r) - \hat{g}(u_r), e_r) \leq \left\|e_r^{\text{DEIM}}(g) \right\|_0 \left\|e_r\right\|_0 \leq \frac{T}{2} \left\|e_r^{\text{DEIM}}(g) \right\|_0^2 + \frac{1}{2T}\left\| e_r \right\|_0^2 \, .
\end{equation*}
These two terms are added to equation \eqref{eq:th_er1}. Then we finally get for $\tK = \frac{3}{T} + 2L$ and for all $t \in [0,T]$,
\begin{eqnarray}\label{ladel_deim}
	\lefteqn{\|u_r(t)-P^r u_h(t)\|_0^2+2\nu\int_0^t \|\nabla (u_r(s)-P^r u_h(s))\|_0^2 \ \di s}\nonumber\\
	&\le& e^{\tK t}\|u_r(0)-P^r u_h(0)\|_0\nonumber\\
	&&+ e^{\tK T}T\left(\int_0^t \|(I-P^r)u_{h,s}(s)\|_0^2\ \di s+L^2\int_0^t\|(I-P^r)u_h(s)\|_0^2 \ \di s \right)\\
	&&+ e^{\tK T}T\int_0^t \| e_r^{\text{DEIM}}(g)\|_0^2\ \di s.\nonumber
\end{eqnarray}
We observe that the last term in \eqref{ladel_deim} is the extra error coming from using the DEIM algorithm in the computation of the nonlinear term (compared with \eqref{er_pro_ur}).

In \cite[Lemma 3.2]{deim} the DEIM error is bounded by  
$$  e_r^{\text{DEIM}}(g) \leq C \| (I -P^{\tr}) g \|_0^2 \, , $$
such that $P^{\tr}$ is the  $H_0^1$-orthogonal projection onto $\tilde{\bU}^{\tr}$, and
$$ C \leq \left( 1 + \sqrt{2 n_k}\right)^{m-1} \, ,$$
where $n_k$ is the number of nodes in the FEM and $m$ is the number of nodes used in the DEIM algorithm.
We observe that the upper bound for the constant $C$ obtained in \cite{deim} is not realistic since as $m\rightarrow n_k$ the empirical interpolant converges to the projection and, as a consequence, $C\rightarrow 1$.

\section{Numerical Experiments}\label{sec:numexp}
Let us consider the system of Brusselator with diffusion in two spatial dimensions:
\begin{equation}\label{bruss}
	\begin{array}{rclcl}
		u_t&=&\nu\Delta u + \alpha +u^2v  - (\beta + 1) u,&\qquad& (x,t)\in \Omega \times (0,T],\\
		v_t &=&\nu \Delta v+ \beta u -u^2v,& \qquad
		& (x,t)\in \Omega \times (0,T],\\
		%% && u(x,0)= u_0(x), \quad v(x,0)=v_0(x),&& x\in \Omega,\\
		&&u(x,t)=\alpha, \quad v(x,t) =\beta/\alpha,&& (x,t)\in \Gamma_1\times(0,T],\\
		&&\partial_n u(x,t)=\partial_n v(x,t) =0,&&  (x,t)\in \Gamma_2\times(0,T],\\
	\end{array}
\end{equation}
where $\nu$ is a positive parameter, $\Omega=[0,1]\times[0,1]$, $\Gamma_1\subset\partial\Omega$ is the union of sides $\{x=1\}\bigcup \{ y=1\}$, and~$\Gamma_2$ is the rest of~$\partial\Omega$. This system has an unstable equilibrium $u=\alpha$, $v=\beta/\alpha$, and, for $\nu$, sufficiently small, a stable limit cycle. If the boundary conditions are $\partial_n u(x,t)=\partial_n v(x,t) =0$ for $(x,t)\in(0,T]\times\partial\Omega$, then, the periodic orbit is flat in space, $\nabla u=\nabla v=0$, but with those in~\eqref{bruss}, it has more spatial complexity. 
In the sequel, we denote by~$\bu=(u,v)$ the two-component solution of~\eqref{bruss} in the variables~$u$ and~$v$.

We choose the value $\nu=0.002$, which offered an adequate balance between spatial complexity and computability in the two-dimensional scenario. In this case we consider a triangulation of~$\Omega$ based on a uniform mesh with 80 subdivision per side and with diagonals running southwest-northeast. We use a spatial discretization with quadratic finite elements, which resulted in a system with 51200 degrees of freedom. For this discretization, we compute for each choice of $(\alpha, \beta)$ in~\eqref{bruss} the corresponding limit cycle (see details in~\cite[Section~6]{temporal_nos}). 

In Sect.~\ref{sec:numone}, we introduce numerical simulations for one parameter in the model. In Sect.~\ref{sec:numparam}, we consider the case of several parameters.

\subsection{One-value parameter model}
\label{sec:numone}
Let us fix $\alpha=1$ and $\beta=2.75$ in \eqref{bruss}. The period for this values is $T=6.78570$s (up to five significant digits). In Fig.~\ref{fig:femsolution}, we present the FEM solution at different significant time instants to see the complexity of problem as time evolves. We consider $t=0$, $t\approx T/3$ and $t=T$. We also show the solution at $ t= 1.23918$ (up to five significant digits), where the maximum of $ \|\nabla u \|_0^2 + \|\nabla v\|_0^2 $  is attained, to see the large variations that occur over one period.

\begin{figure}[htbp]
	\centering
	\begin{minipage}{\textwidth}
		\centering
		\includegraphics[width=0.24\linewidth]{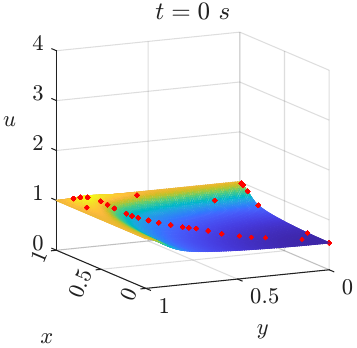}
		\includegraphics[width=0.24\linewidth]{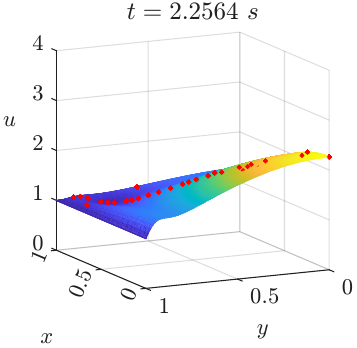} 
		\includegraphics[width=0.24\linewidth]{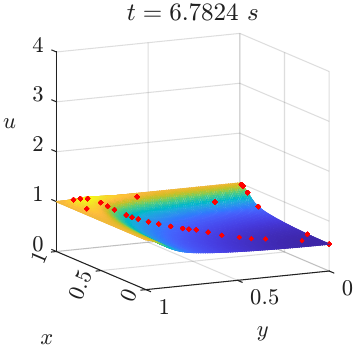}
		\includegraphics[width=0.24\linewidth]{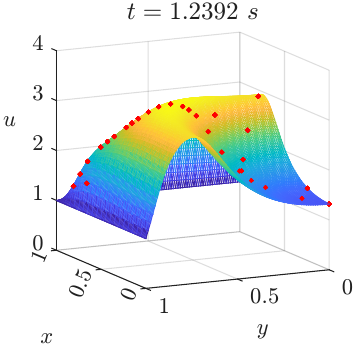}
	\end{minipage}
	
	\vspace{1cm} 
	
	\begin{minipage}{\textwidth}
		\centering
		\includegraphics[width=0.24\linewidth]{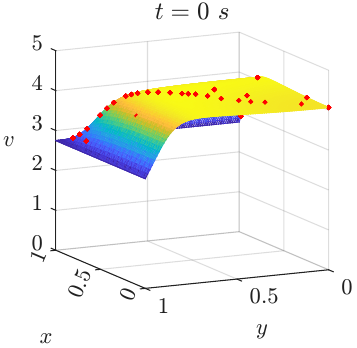}
		\includegraphics[width=0.24\linewidth]{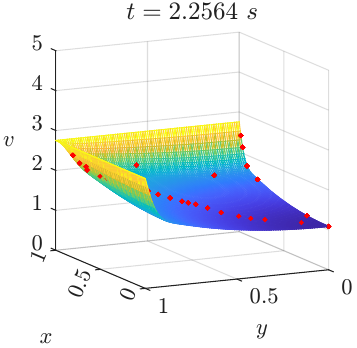} 
		\includegraphics[width=0.24\linewidth]{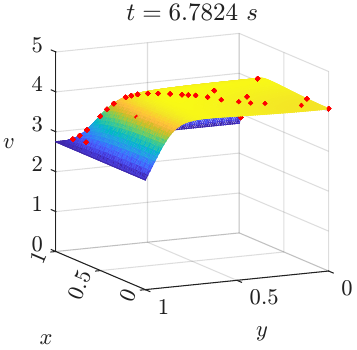}
		\includegraphics[width=0.24\linewidth]{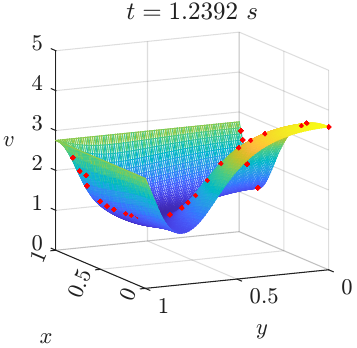}
		
	\end{minipage}
	\caption{Components $u$ (up) and $v$ (down) of the periodic solution at time $t=0, t=2.256378 \approx T/3, t=T$ and $t=1.23918$ (up to five significant digits), where the maximum of $ \|\nabla u \|_0^2 + \|\nabla v\|_0^2 $  is attained. Red points indicate the $m=30$ selected nodes by DEIM algorithm, see Algorithm \ref{alg:deim}.}
	\label{fig:femsolution}
\end{figure}

As the problem has non-homogeneous Dirichlet boundary conditions, we compute the POD basis $\bU^r$ with DQs plus the initial time snapshot subtracting the boundary-condition vector
\begin{equation}
	\label{eq:p}
	p = \begin{pmatrix}
		\alpha\,\mathbf{1}_{nn}\\[1mm]
		\beta/\alpha\,\mathbf{1}_{nn} 
	\end{pmatrix} \, ,
\end{equation} where $\mathbf{1}_{nn}\in\R^{nn}$ denotes the constant vector with $nn$, the number of nodes in the FEM mesh, ones. The DQs choice is done in order to have pointwise in time error estimates, see Sect.~\ref{sec:pod}. We consider both projections onto $X=L^2(\Omega)$ and $X=H_0^1(\Omega)$ to define the POD basis. We want to find the POD approximation $\bu_r = \bz_r + p$ such that $\bz_r:(0,T]\rightarrow \bU^r$ and it solves:
\begin{eqnarray}\label{eq:pod_ex}
	(\bz_{r,t},v_r)+\nu(\nabla \bz_r,\nabla v_r)+(g(\bz_r + p),v_r)=(f,v_r),\quad \forall\ v_r\in \bU^r,
\end{eqnarray}
with $\bz_r(0)=P^r_X (\bu^0 - p)$. We integrate in time \eqref{eq:pod_ex} with \texttt{ode15s} and a small enough tolerance so that the temporal errors are negligible, see \cite{NDF1}.

First, we want to understand the influence of the choice on the projection space $X$. In Fig.~\ref{fig:L2-H1}, we present singular values, $\sigma_k = \lambda_k^{1/2}$ for $X=H_0^1(\Omega)$ and $\hat{\sigma}_k = \hat{\lambda_k}^{1/2}$ for $X=L^2(\Omega)$ taking  $M=32\, , 64\, , 128\, , 256$ and $512$. Recall that bounds  \eqref{eq:ergradH1} and \eqref{eq:ergradL2} depends on the tail of the eigenvalues. It can be observed for $M=32$ and $M=64$ that the larger the number of snapshots, the slower the decay of the singular values. For $M \ge 128$ the results do not change. In this model it is sufficient to have $\Delta t = T/128$ in the POD method.

\begin{figure}[ht]
	\centering
	\includegraphics[width=0.48\textwidth]{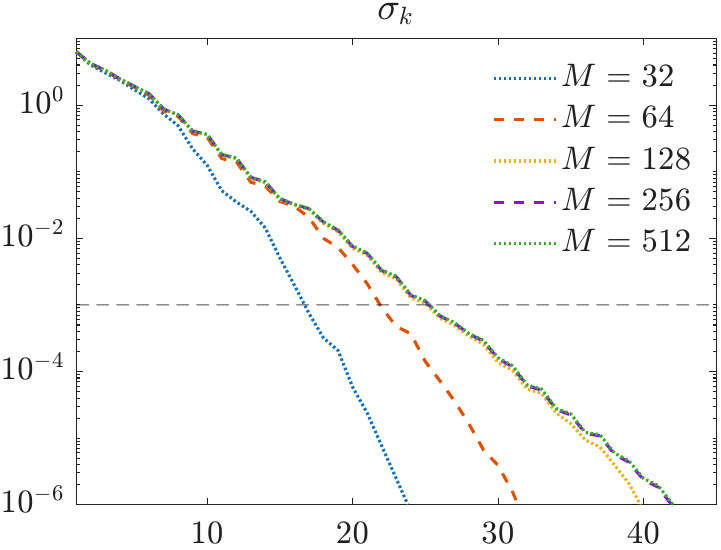}%
	\hfill
	\includegraphics[width=0.48\textwidth]{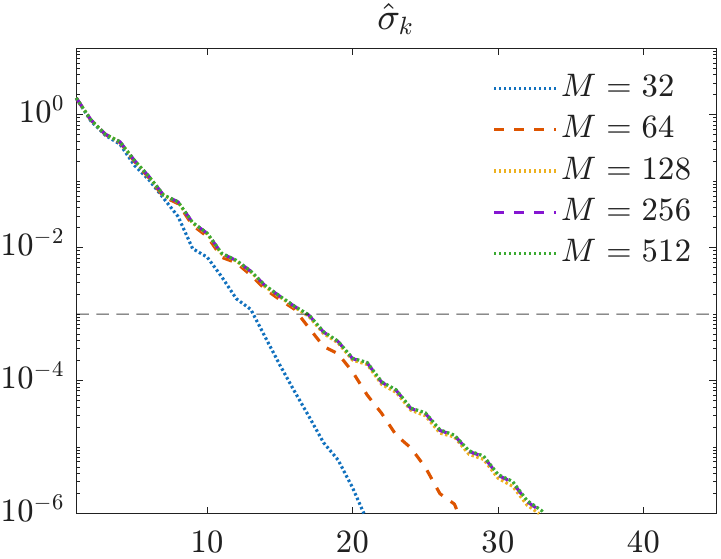}
	\caption{\label{fig:L2-H1} Singular values of the correlation matrix for $M=32, 64, 128, 256$ and $512$ projecting onto $X=H_0^1(\Omega)$ (on the left) and onto $X=L^2(\Omega)$ (on the right). The discontinuous black line represents the value $10^{-3}$.}
\end{figure}

As expected from the comments in Sect.~\ref{sec:pod}, if we choose $X=L^2(\Omega)$, we have smaller singular values than taking $X=H_0^1(\Omega)$. For $M= 128$, note that in order to have the tail of singular values below $10^{-3}$ (discontinuous line) with $X=L^2(\Omega)$ is enough to take $r=17$ while for $X=H_0^1(\Omega)$, we need to take $r=26$. 
However, observe that the value $r=17$ would give errors in $L^2$ of order $10^{-3}$ but larger errors in $H^1$. For this reason,
%Because of that, even if the optimal choice for pointwise in time error bounds is $X=H_0^1(\Omega)$, in practice $L^2(\Omega)$ performs also well. 
we take $r=26$ in both cases. We define
\begin{equation}
	\label{error_notation}
	{\boldsymbol \varepsilon}_r(t) = \left( \bu_h(t) - p \right) - P^r_X \left( \bu_h(t) - p \right),\qquad \be_r(t) = \bu_h(t)-\bu_r(t),
\end{equation}
the projection error in time and the error between the FEM and POD approximation, respectively. In Tab.~\ref{tab:ercont}, we present the relative errors of ${\boldsymbol \varepsilon}_r$ and $\be_r$ in $H^1$ and $L^2$ norm for $X=L^2(\Omega)$ and  $X=H_0^1(\Omega)$, measured on a very fine partition over a period (2049 equally-distributed points). For convenience, we omit the notation indicating relative errors, since here and in the sequel, all errors presented are relative.

\begin{table}[!t]
	\centering
	\begin{tabular}{|c|c|c|c|c|}
		\hline
		$X$ &
		$\displaystyle \max_{t}\|\nabla {\boldsymbol \varepsilon_r}(t)\|_0$ &
		$\displaystyle \max_{t}\|\nabla \be_r(t)\|_0$ &
		$\displaystyle \max_{t}\|{\boldsymbol \varepsilon_r}(t)\|_0$ &
		$\displaystyle \max_{t}\|\be_r(t)\|_0$ \\
		\hline
		$L^2(\Omega)$ &
		$1.1113 \times 10^{-4}$ &
		$9.8163 \times 10^{-5}$ &
		$2.5481 \times 10^{-6}$ &
		$4.1344 \times 10^{-6}$ \\
		\hline
		$H_0^1(\Omega)$ &
		$8.4756\times10^{-5}$ &
		$9.5666\times10^{-5}$ &
		$5.8134\times10^{-6}$ &
		$3.3514\times10^{-6}$ \\
		\hline
	\end{tabular}
	\caption{\label{tab:ercont} Maximum relative errors ${\boldsymbol\varepsilon}_r$ and~$\be_r$ for $r=26$ and projecting onto $X=L^2(\Omega)$ and $X=H_0^1(\Omega)$.}
\end{table}

Comparing \eqref{proyec_L2_er} with \eqref{eq:pointL2}, and taking into account the size of the eigenvalues, we expect the error in the projection, ${\boldsymbol \varepsilon_r}(t)$, in $L^2$ to be smaller if we take $X=L^2(\Omega)$. On the other hand,  comparing now \eqref{eq:ergradH1} with \eqref{eq:ergradL2} (and depending on the size of $\|S_r\|_2$) we could expect the error in the $H^1$ norm to be smaller if we choose $X=H^1_0(\Omega)$. This expected behavior can indeed be observed numerically by checking the darker gray cells in Tab.~\ref{tab:ercont}. We can also observe that the maximum error in the approximation, $\be_r(t)$, in both cases performs similarly. For this reason, in many papers in the literature, there is no preference choice on $X=H^1(\Omega)$. 

In Fig.~\ref{fig:rel-error-L2-H1}, we have represented the errors in the approximation between the POD and the FEM approximation (discontinuous magenta line) and in the projection (continuous blue line) measured in $T/2048$. We observe that both errors are close in most part of the period. 
Since, by definition, the projection is the best approximation in the reduced space to the finite element solution, we can confirm the good performance of the POD method for both choices of $X$ along the period.

\begin{figure}[htbp]
	\begin{center}
		\includegraphics[width=0.49\textwidth]{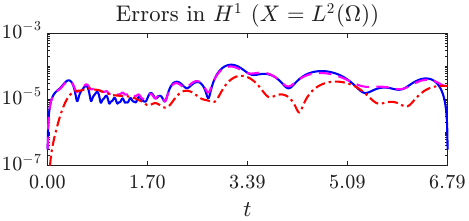} \hfill 
		\includegraphics[width=0.49\textwidth]{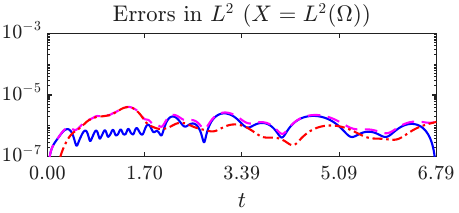} \\[1ex]
		\includegraphics[width=0.49\textwidth]{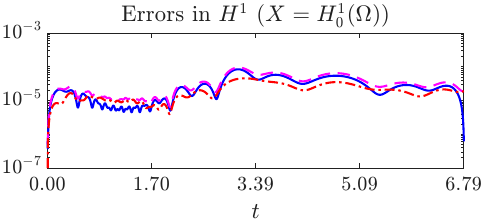} \hfill
		\includegraphics[width=0.49\textwidth]{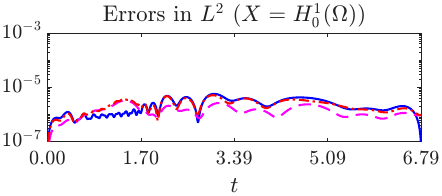}
		\caption{\label{fig:rel-error-L2-H1} Top: relative errors for $X=L^2(\Omega)$. Bottom: relative errors for $X=H^1_0(\Omega)$. Errors $	{\boldsymbol \varepsilon}_r$ (continuous blue line), $\be_r$ (discontinuous magenta line) and $\bz_r - P^r_X \left(\bu_h -p\right)$ (red dotted line), for $M = 128$ and $r = 26$, in $H^1$ (left) and $L^2$ (right) norm.}
	\end{center}
\end{figure}

After checking the similar behavior of both choices of the space $X$, in the following experiments, we fix $X=H_0^1(\Omega)$. Next, we want to study the efficiency of the proposed methods for computing the nonlinear term in the reduced equations described in Sect.~\ref{sec:nonlin}. We compute the POD approximation taking $r=26$ with the FEM structure, the tensor structure and the Discrete Empirical Interpolation Method. 

In Tab.~\ref{tab:compt-time}, we show  the computational cost to get the approximation by integrating in time with the {\sc MATLAB}'s solver \texttt{ode15s}, as before. It is clear that for the two-dimensional model computing the nonlinear term with the FEM structure is inefficient. With DEIM algorithm we get the same errors (as we will show now) but with less computational cost. On the other hand, the computational time needed for the tensor procedure is approximately double than the corresponding to DEIM. Furthermore, the offline computational time needed in the tensor case is huge.

\begin{table}[!t]
	\centering
	\begin{tabular}{lc}
		\hline
		\textbf{Method} & \textbf{Online computational time (s)} \\
		Complete FEM formulation   & 34.1169 \\
		Tensor structure & 0.8388  \\
		DEIM   & 0.4643  \\
		\hline
	\end{tabular}
	\caption{Online computational time to obtain the POD solution for $\beta=2.75$, using $M=128$ and $r=26$. For DEIM, $\tr=m=30$, see Algorithm \ref{alg:deim}.}
	\label{tab:compt-time}
\end{table}

As a conclusion, if the model problem has a nonlinear term with a simple polynomial form, the tensor structure has an exact form and we can reduce the computational cost (of the online phase). However, if the nonlinear term has a more complex structure, we cannot apply the tensor construction and POD-DEIM approximations are sufficiently good and are attained faster than the other alternatives.

For the DEIM implementation, the boundary condition of the nonlinear term in the model $\alpha \cdot \beta$ is subtracted before computing the nonlinear POD basis. It is also necessary to choose the number of modes in that basis ($\tr$) and the number of nodes in the empirical interpolant ($m$). For simplicity, we decide to take $m = \tr$. Taking into account the definition in \eqref{eq:p}, the POD-DEIM approximation is described as $\bu_r^{\text{DEIM}} = \bz_r^{\text{DEIM}} + p$ such that $\bz_r^{\text{DEIM}}:(0,T]\rightarrow \bU^r$ and it solves:
\begin{eqnarray}\label{eq:pod_exdeim}
	(\bz_{r,t}^{\text{DEIM}},v_r)+\nu(\nabla \bz_r^{\text{DEIM}},\nabla v_r)+(\hat{g}(\bz_r^{\text{DEIM}}+p),v_r)=(f,v_r),\quad \forall\ v_r\in \bU^r,
\end{eqnarray}
with $\bz_r(0)=P^r_{H_0^1} \left(\bu^0-p\right)$.

In Tab.~\ref{tab:deim}, we present how the relative error changes for different values of $\tilde{r}$. The projection error in $H^1$ is $8.4756 \times 10^{-5}$, see Tab.~\ref{tab:ercont}, and we want the POD-DEIM approximation to get closer to the projection. So, we set $\tilde{r}=30$ as the relative error in $H^1$ is $9.6525 \times 10^{-5}$.

\begin{table}[!t]
	$$
	\begin{array}{|c|c|c|}
		\hline 
		\tilde{r} &  {\displaystyle \max_{t}}\|\nabla \be_r^{\text{DEIM}}(t) \|_0 & 
		{\displaystyle \max_{t}}\| \be_r^{\text{DEIM}}(t) \|_0
		\\ \hline
		14 & 2.0330 \times 10^{-2} & 1.1267 \times 10^{-3} \\ \hline
		18 & 4.5541 \times 10^{-3} & 4.4254 \times 10^{-4} \\ \hline
		22 & 3.0994 \times 10^{-4} & 1.8701 \times 10^{-5} \\ \hline
		26 & 1.7611 \times 10^{-4} & 9.5249 \times 10^{-6} \\ \hline
		30 & 9.6525 \times 10^{-5} & 4.1698 \times 10^{-6} \\ \hline
	\end{array}
	$$
	\caption{Maximum relative errors for ~$\be_r^{\text{DEIM}}$ in $H^1$ and $L^2$ norm computing POD-DEIM approximation on 2049 equally-distributed points. Taking $r=26$ and $\tr=m=30$.}
	\label{tab:deim}
\end{table}
In Fig.~\ref{fig:femsolution}, the red points represent the values of the $m=30$ nodes in the empirical interpolant. These red points mark the most representative values of the empirical interpolation selected by Algorithm \ref{alg:deim} at different time instances.

In order to see that the relative errors over the period performs as well as the maximum errors of Tab.~\ref{tab:deim}, we can observe Fig.~\ref{fig:erdeim}. In this figure, the $H^1$ norm of the error $\be_r^{\text{DEIM}}$ (discontinuous magenta line), is close to the projection error (blue continuous line). If we compare this figure with Fig.~\ref{fig:rel-error-L2-H1} for $X=H_0^1(\Omega)$, we see that with the DEIM algorithm we achieve the same errors in $H^1$ and $L^2$ norm but with a significant decrease in the computational cost.

\begin{figure}[htbp]
	\begin{center}
		\includegraphics[width=0.49\textwidth]{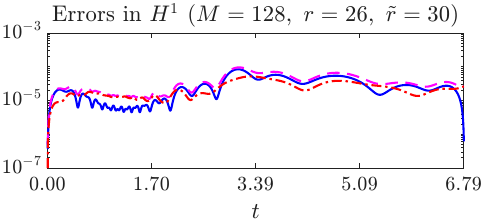} 
		\hfill
		\includegraphics[width=0.49\textwidth]{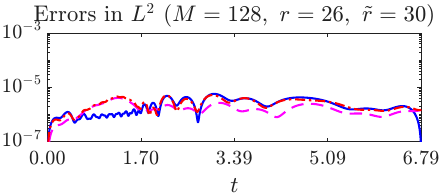}
		\caption{\label{fig:erdeim} $L^2$ (right) and $H^1$ (left) errors ${\boldsymbol \varepsilon_r}$ (continuous blue line), $\be_r^{\text{DEIM}}$ (discontinuous magenta line) and $\bz_r^{\text{DEIM}} - P^r_X \left(\bu_h-p\right)$ (red dotted line), for $M = 128\, , r = 26$. Relative errors measured every $T /2048$ units of time over one period implementing POD-DEIM method using $\tr=30$ and $m=30$.}
	\end{center}
\end{figure}

\subsection{Multi-value parameter model}\label{sec:numparam}
In this section we want to show the performance of the method introduced in \cite{newmethod} for model problems in two spatial dimensions. In \cite{newmethod} numerical experiments in a problem in one spatial dimension can be found. Next, we will show some simulations in the two-dimensional case for understanding the efficiency of this method compared with the standard one when the complexity of the problem increases. 

First, we comment on the choice of the parameters to see the difficulty of the problem. As in Sect.~\ref{sec:numone}, let us fix $\alpha = 1$ and $\nu=0.002$. We want to study the Brusselator model in \eqref{bruss} depending on the parameter $\beta \in [2.6, 2.9]$. We take $\beta$ in that interval as the solution changes significantly when $\beta$ gets larger. The period for $\beta = 2.6$ is $T=6.6352720 s$ and for $\beta=2.9$ is $T=6.9606722 s$ (up to 7 significant digits). We get the finite element approximation of the solution $\bu^\beta_h$ at $\beta = 2.6, \, 2.7, \, 2.8$ and $2.9$. In Fig.~\ref{fig:femsolution-param}, we present the FEM solution at the time instant where the maximum of $ \|\nabla u \|_0^2 + \|\nabla v\|_0^2 $  is attained for each $\beta$. We observe that large variations occur close to the beginning of the period for each parameter (the exact times appearing in the figure). It is also clear that the complexity of the problem (larger gradients) increases with the values of $\beta$. 

\begin{figure}[htbp]
	\centering
	\begin{minipage}{\textwidth}
		\centering
		\includegraphics[width=0.24\linewidth]{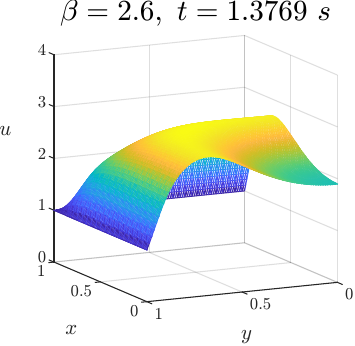}
		\includegraphics[width=0.24\linewidth]{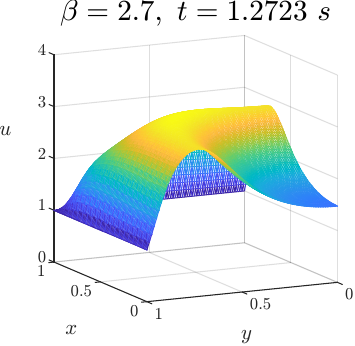} 
		\includegraphics[width=0.24\linewidth]{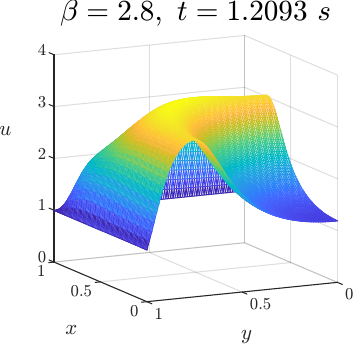}
		\includegraphics[width=0.24\linewidth]{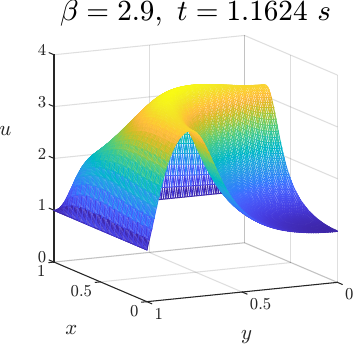}
	\end{minipage}
	
	\vspace{1cm} 
	
	\begin{minipage}{\textwidth}
		\centering
		\includegraphics[width=0.24\linewidth]{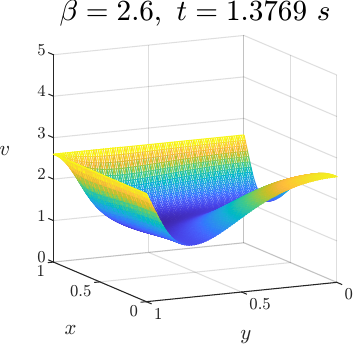}
		\includegraphics[width=0.24\linewidth]{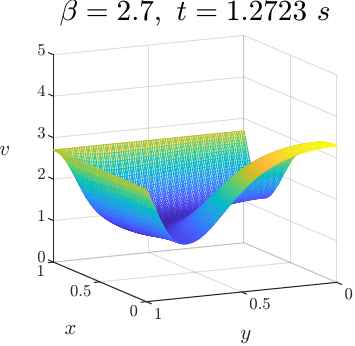} 
		\includegraphics[width=0.24\linewidth]{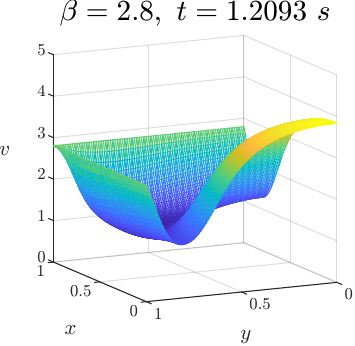}
		\includegraphics[width=0.24\linewidth]{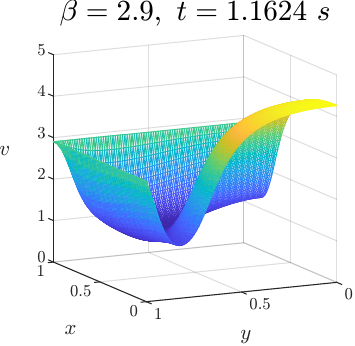}
		
	\end{minipage}
	\caption{Components $u$ (up) and $v$ (down) of the periodic solution at the time where the maximum of $ \|\nabla u \|_0^2 + \|\nabla v\|_0^2 $  is attained for $\beta = 2.6, \, 2.7, \, 2.8$ and $2.9$.}
	\label{fig:femsolution-param}
\end{figure}

For $\beta \in [2.6, 2.9]$, we want to find the POD approximation $\bu_r^{\beta} = \bz_r^{\beta} + p^{\beta}$ with $p^{\beta}$ as in \eqref{eq:p} with the corresponding parameter value $\beta$. The POD basis $\bU^r$ is computed by the new and standard methods introduced in Sect.~\ref{sec:podparam} with the four parameters: $\beta =2.6, 2.7, 2.8$ and $2.9$. The boundary-condition vector $p^{\beta}$ is subtracted to the corresponding snapshots for both methods. The projection space is $X=H_0^1(\Omega)$. The reduced system to solve for $\bz_r^{\beta} :(0,T]\rightarrow \bU^r$ is 
\begin{eqnarray}\label{eq:pod_ex_param}
	(\bz_{r,t}^{\beta},v_r)+\nu(\nabla \bz_r^{\beta},\nabla v_r)+(g(\bz_r^{\beta}+p^{\beta}),v_r)=(f,v_r),\quad \forall\ v_r\in \bU^r,
\end{eqnarray}
with $\bz_r^{\beta}(0)=P^r_{H_0^1} \left(\bu^{\beta}(t_0)-p^{\beta}\right)$. 

As in the previous simulations, we integrate in time \eqref{eq:pod_ex_param} with {\sc MATLAB}'s solver \texttt{ode15s} with a relative tolerance of $10^{-12}$.

In Fig.~\ref{fig:std_nm}, we present singular values, $\sigma_k = \lambda_k^{1/2}$, for the standard and new method taking  $M=32\, , 64\, , 128\, , 256\, , 512$.  Recall from Theorem  \ref{theo:parampoint} that for the new method the error depends on the tail of the eigenvalues. For the standard method, even though the error bounds are not optimal, the error depends also on the tail of the eigenvalues, although with order less than one, see \cite[Theorem 3]{newmethod}. It can be observed that for $M \ge 128$ the results remain unchanged. This means that $M=128$ is sufficient in both POD methods. Overall, for computing the corresponding POD basis it is necessary to compute $N=(M+1)(L+1)=(M+1)4$ elements. It is also clear that the singular values for the new method are considerable larger and the decay is slower than in the standard method. If we consider the discontinuous gray line corresponding to $10^{-3}$, for the standard method it is sufficient to take $r= 36$ whereas for the new method we need to take $r=74$. 

\begin{figure}[htbp]
	\begin{center}
		\includegraphics[width=0.48\textwidth]{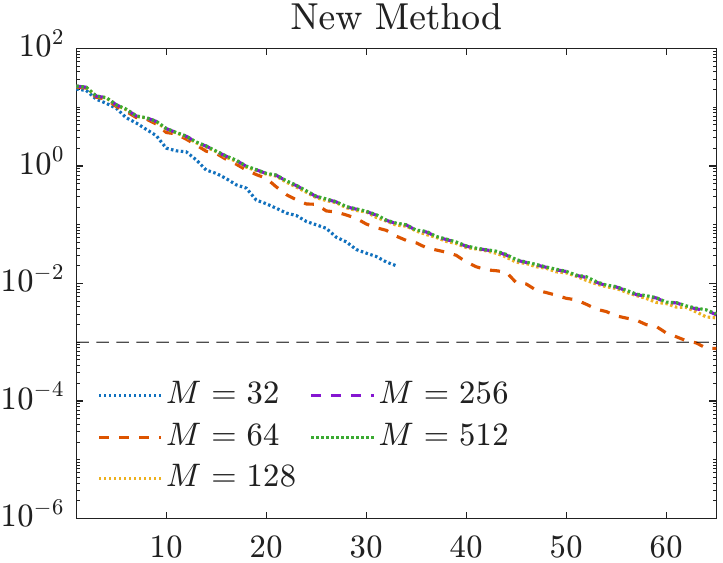}%
		\hfill
		\includegraphics[width=0.48\textwidth]{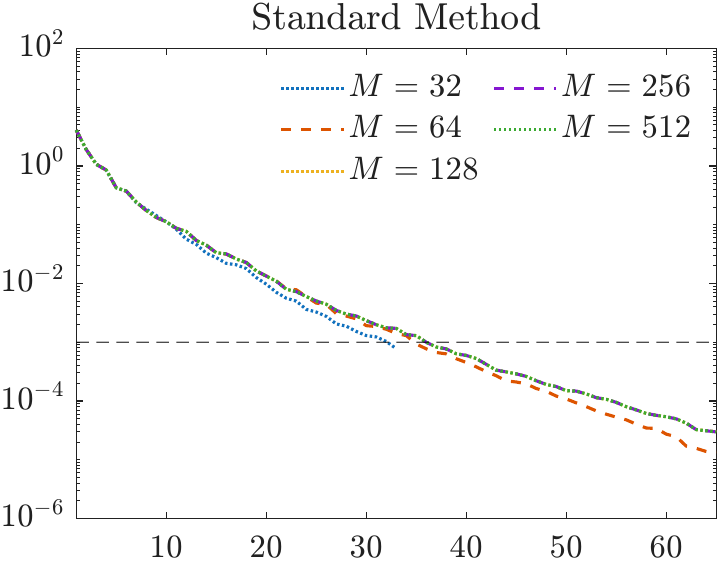}
		\caption{Singular values of the correlation matrix for $M=32, 64, 128, 256, 512$ for the new method (on the left) and the standard method (on the right) taking $\beta = 2.6, 2.7, 2.8$ and $2.9$. The discontinuous black line represents the value $10^{-3}$.}
		\label{fig:std_nm}
	\end{center}
\end{figure}

In Tab.~\ref{tab:er_params}, we want to study the number of necessary modes to get an error in the projection $\approx 10^{-5}$ in the maximum 
$H^1$ norm  for $\beta=2.6$ and $\beta=2.9$. Recall that all errors are relative. We consider two procedures:
\begin{enumerate}
	\item Obtain the POD basis with only one value of parameter $\beta$ (as in Sect.~\ref{sec:numone}) with $X=H_0^1(\Omega)$ and $M=128$.
	\item Obtain the POD basis with four values of parameter $\beta$, as stated before, for the new and standard methods.
\end{enumerate}
We observe that for the first procedure we always need less modes. We can also see that the number of modes increases with the complexity of the problem (we go from $r=21$ for $\beta=2.6$ to $r=34$ for $\beta=2.9$).
On the other hand, we observe that the number of modes in the second case is larger for both methods. Also, the number of modes does not change with the value of the parameter. Finally, concerning the second procedure, we need more modes in the new method than in the standard one. We also consider the projection and approximation errors, defined in~\eqref{error_notation}. We see that in all considered scenarios the approximation error, $\be_r$, is similar to the projection error, ${\boldsymbol \varepsilon}_r$. We can conclude that in both settings the POD methods performs well.

\begin{table}[ht]
	\centering
	\caption{Maximum relative errors for ~$\be_r$ and ${\boldsymbol \varepsilon}_r$ in $H^1$ norm on 2049 equally-distributed points at one period for $\beta=2.6$ and $\beta=2.9$.}
	\label{tab:er_params}
	\renewcommand{\arraystretch}{1.2}
	\begin{tabular}{|l||c|c|c||c|c|c|}
		\hline
		& \multicolumn{3}{|c|}{\textbf{$\beta = 2.6$}} & \multicolumn{3}{|c|}{\textbf{$\beta = 2.9$}}\\ \hline
		&
		$r$ &
		$\max\limits_{t}\|\nabla{\boldsymbol \varepsilon_r}(t)\|_0$ &
		$\max\limits_{t}\|\nabla{\boldsymbol \be_r}(t)\|_0$ &
		$r$ &
		$\max\limits_{t}\|\nabla{\boldsymbol \varepsilon_r}(t)\|_0$ &
		$\max\limits_{t}\|\nabla{\boldsymbol \be_r}(t)\|_0$  \\ \hline
		DQs for one $\beta$ &$21$ & \cellcolor{gray!20}$ 5.6794\times 10^{-5}$& $6.6752 \times 10^{-5}$ & $34$ & \cellcolor{gray!20}$ 4.5470\times 10^{-5}$& $ 5.0174 \times 10^{-5}$ \\ \hline
		Standard method & $54$ & \cellcolor{gray!20}$8.0026 \times 10^{-5}$ & $ 1.5696 \times 10^{-4}$& $54$ & \cellcolor{gray!20}$7.7634\times 10^{-5}$ & $1.4424 \times 10^{-4}$ \\ \hline
		New method & $80$ & \cellcolor{gray!20}$9.1438 \times 10^{-5}$ & $1.3728 \times 10^{-4}$ & $80$ & \cellcolor{gray!20}$7.3756 \times 10^{-5}$& $9.9854 \times 10^{-5}$ \\ \hline
	\end{tabular}
\end{table}

We can compare these results with the numerical simulations for the Brusselator in one spatial dimension in \cite{newmethod}. In that paper, it is considered ~\eqref{bruss} with~ $\Omega = [0,1]$, $\nu=0.01$, $\alpha=1$ and~$\beta\in[2.75,4.25]$, where the periods range from~$T=6.7725$ for $\beta=2.75$ to~$T=9.5949$ for $T=4.25$ (both values rounded to five significant digits). 
For the FEM approximation in~space, quadratic elements were used on a uniform partition of~$[0,1]$ into elements of length~$h=1/50$. For the time integration, {\sc MATLAB}'s command {\tt ode15s} was used with tolerance values $10^{-8}$ and $10^{-11}$ for relative and absolute values, respectively, of the local error.

For the POD method, the value of $M$ is  $M=64$, that is, for every $\beta=2.75, 3.25, 3.75$, $4.25$ there are $65$ snapshots on equally distributed times $t_n^{\beta}$ on each period~$[0,T^\beta]$, and
the corresponding POD basis and POD approximations~$\bu_r$ were computed for the new and standard method for $r=42$.

In this scenario, the conclusion of that article was that the new method performs slightly better than the standard one with the same number of modes. However, in our numerical experiments with higher complexity we cannot conclude the same results. In Fig.~\ref{fig:1d}, we present the singular values of the standard and new approaches for the problem described in \cite{newmethod} for one spatial dimension. We can observe that under that scenario, the singular values have a similar size and the decay is faster in the new method. This explains the better behavior of the new method in the one-dimensional case.

\begin{figure}[ht]
	\begin{center}
		\includegraphics[width=0.48\textwidth]{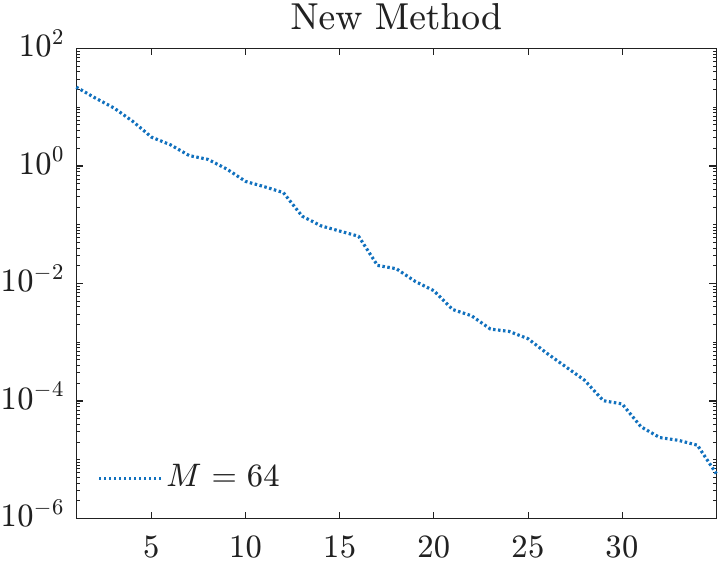}
		\includegraphics[width=0.48\textwidth]{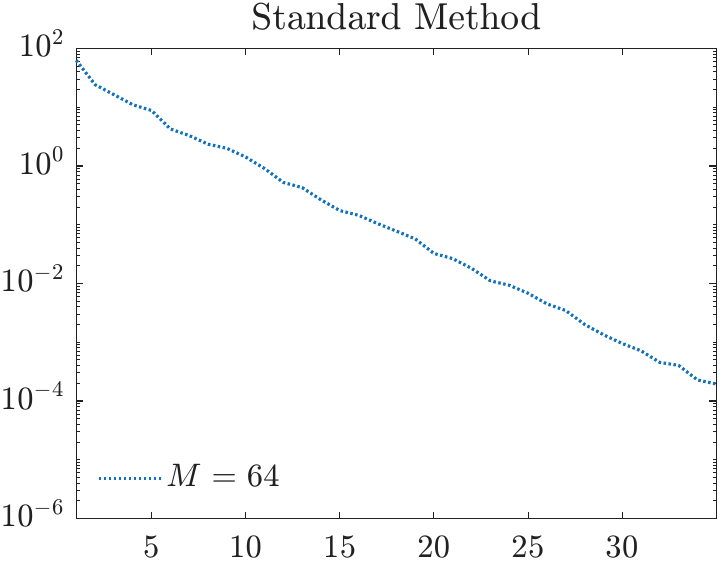}
		\caption{\label{fig:1d}Singular values of the correlation matrix for $M=64$ for the new method (on the left) and the standard method (on the right) taking $\beta = 2.75, 3.25, 3.75$ and $4.25$ in the one-dimensional Brusselator model, see \cite{newmethod}.}
	\end{center}
\end{figure}

To conclude, we want to point out that with both the standard and new method we can get not only approximations to the solutions corresponding to parameters in the data set, but also for those corresponding to any parameter value in the interval, with sufficient accuracy. To check this fact, we take $\beta =2.75$, a parameter out of our sample in the two-dimensional case. We compute the POD approximation in both cases for $r=80$ modes and we use as the initial condition the projection the finite element approximation at the initial time. Recall from Sect.~\ref{sec:numone} that the period in time for this parameter is $T= 6.7857023$s (up to seven significant digits). In Fig.~\ref{fig:out}, we present the relative projection errors in $H^1$ and $L^2$ of the POD approximation with the standard (dark green continuous line) and new method (blue continuous line). We can also see that the approximation errors in $H^1$, the discontinuous orange line for the new method and the discontinuous magenta for the standard, are close to the projection error. That assures the 
good behavior of  both methods for this out of the sample parameter. As for the parameters in the data set in Tab.~\ref{tab:er_params}, the standard method performs better than the new method along the period.

\begin{figure}[ht]
	\begin{center}
		\includegraphics[width=0.49\textwidth]{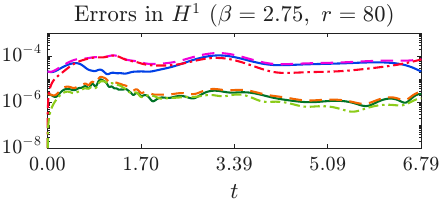}
		\hfill
		\includegraphics[width=0.49\textwidth]{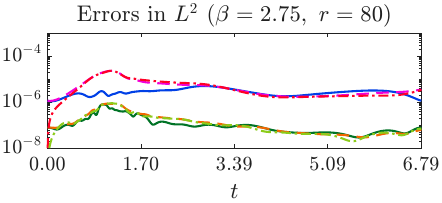}
		\caption{\label{fig:out} $L^2$ (right) and $H^1$ (left) errors ${\boldsymbol \varepsilon_r}$ (continuous dark green line for the standard method, and continuous blue line for the new methd), $\be_r$ (discontinuous orange line for the standard and magenta line for the new method) and $\bz_r^{\beta} - P^r_{H_0^1} \left(\bu_h^{\beta}-p^{\beta}\right)$ (light green dotted line for the standard and red dotted line for the new method), for $M=128,\, r = 80$ in both cases. Relative errors measured every $T /2048$ units of time over one period.}
	\end{center}
\end{figure}

\section{Conclusions}
In this article, we study some computational aspects of reduced-order methods of POD type for evolutionary parametric equations. Firstly, we analyze the two options on the projection space. In the literature, projecting onto $L^2(\Omega)$ is frequent although
it is proved that projecting onto $H^1_0(\Omega)$ the a priori bounds are optimal. We analyze both scenarios and conclude that despite the differences in the error estimates, in practice, both of them perform similarly. 

We also study the computational cost of implementing the nonlinear term in the reduced equations. There are three strategies that we compare to address this issue:
\begin{itemize} 
	\item Computation of the nonlinear term with FEM formulation (in the online part of the code).
	\item Construction of a tensor structure (in the offline part) and the use of $r$ modes in the tensor structure (in the online part of the code).
	\item Use of Discrete Empirical Interpolation Method. It consists on the computation of a POD basis of the nonlinear term and by fixing a number of modes $\tr$, the implementation of DEIM algorithm for selecting significant spatial nodes in the nonlinear part. The nonlinear term is projected onto the original POD basis with a cost based only on $r$ and $\tilde r$.
\end{itemize}
The accuracy of the three procedures is almost the same while the computational cost of the last two methods is significantly smaller. Moreover, the second method only works for polynomial structures of the nonlinear term and the DEIM procedure is faster (around half of the time).

Finally, we add a complementary study in two spatial dimensions to the results presented in \cite{newmethod} (for the one-dimensional case) for the new and standard method, both of them analyzed in Sect.~\ref{sec:podparam}. We conclude that, contrary to the one-dimensional case, the standard method performs better.

\bibliographystyle{abbrv}
\bibliography{ref.bib}
\end{document}